\documentclass{amsart}

\usepackage{Preamble}

\usepackage[
backend=biber,
style=alphabetic,
sorting=nyt,
maxbibnames=50
]{biblatex}

\title{The 2-Twist Spun Trefoil has Crossing Number Six}
\author{Sherry Gong}
\address{Texas A\&M University}
\email{sgongli@tamu.edu}

\author{Samuel Lewis-Monkman}
\address{University of Nebraska-Lincoln}
\email{smonkman2@huskers.unl.edu}

\author{Jesse Osnes}
\address{Kansas State University}
\email{osnes@ksu.edu}

\begin{document}

\maketitle

\begin{abstract}
We study the tri-plane crossing number, that is, the minimal number of crossings in a tri-plane diagram for a bridge trisection of a knotted sphere in $S^4$. We show that every 2-knot in $S^4$ that admits a bridge trisection with at most five crossings is ribbon. As a consequence, we show that the 2-twist spin of the trefoil has crossing number 6. This is the first such computation for a non-trivial knotted surface.
\end{abstract}

\section{Introduction}

In this paper, we study the complexity of bridge trisections of knotted surfaces in $S^4$. A bridge trisection of a surface link $L$ in $S^4$, originally introduced by Meier and Zupan in \cite{mz}, is a decomposition of $(S^4, L)$ into three pieces, $(B_1, D_1)$, $(B_2, D_2)$, $(B_3, D_3)$, where $B_i$ is a $4$-ball, and $D_i$ is a surface that consists of some number of disks, so that the boundary of $D_i$  is an unlink in the $S^3$ boundary of $B_i$. The bridge trisection can be thought of as a 4-dimensional analogue of the bridge splitting, which decomposes a link as two trivial tangles. 

For a bridge trisection, the intersections of the three pieces with each other determine three tangles in $B^3$, which together form a tri-plane diagram for the trisection. This tri-plane diagram completely determines the surface link $L$ up to isotopy. In this paper, we study a natural diagrammatic measure of complexity for a surface link $L$, the tri-plane crossing number, which is the minimal number of crossings in a tri-plane diagram for a bridge trisection of $L$.

The project of classifying knotted surfaces by complexity is not new. In \cite{yosh}, Yoshikawa presents a table of several surfaces in $\R^4$, organized by \textit{ch}-index, a count of crossings and hyperbolic points coming from his \textit{ch}-diagrams. This table is updated for bridge trisections in \cite{wolf}, although only upper bounds are given for crossing number. More is known for specific cases: \cite{fusion} extends Yoshikawa's table for ribbon spheres formed attaching a single tube to a pair of unlinked spheres.

Bridge trisections specifically are a relatively new object of study, and many of the canonical examples are due to \cite{wolf}. Notably, we are concerned here only with knotted spheres in $S^4$. Unknotted orientable surfaces are known to have crossing number 0 \cite{mz}, and unknotted non-orientable surfaces may require arbitrarily many crossings (depending on the number and sign of $\R P^2$-summands) \cite{wolf}.

Historically, tri-plane crossing number has been challenging to calculate, and especially to bound below. As in the classical setting, much of the work on bridge trisections is stratified by bridge number. For a classical knot, however, there is a straightforward relationship between crossing number and bridge number, so that diagrams with $n$ crossings can be enumerated with a bound on bridge number. No such relationship is known for bridge trisections, so it is not possible in general to rule out diagrams for knotted surfaces with few crossings but very high bridge number. The main result of this paper provides the first known lower bound on crossing number for bridge trisections.

\begin{thm:five-crossing-ribbon}
    Every 2-knot embedded in $S^4$ admitting a bridge trisection with at most five crossings is ribbon.
\end{thm:five-crossing-ribbon}

We show this by studying the number of Reidemeister III moves required to simplify the unlink diagrams involved in tri-plane diagrams with a certain number of crossings and using some results of Satoh that study the triple point numbers of 2-knots (\cite{S03}, \cite{S05}), as well as a result of Joseph, Meier, Miller, and Zupan that relates the triple point number of a 2-knot to the Reidemeister moves required to simplify a corresponding tri-plane diagram (\cite{jmmz}).

The restriction of low-crossing knots being ribbon, together with a diagram due to \cite{diagram} gives us the first computation of a bridge-trisected 2-knot's crossing number.

\begin{cor:2-twist}
The 2-twist spin of the trefoil has crossing number 6.
\end{cor:2-twist}

\begin{figure}[h!]
    \centering

\tikzset{every picture/.style={line width=0.75pt}} %set default line width to 0.75pt        

\begin{tikzpicture}[x=0.75pt,y=0.75pt,yscale=-1,xscale=1]
%uncomment if require: \path (0,247); %set diagram left start at 0, and has height of 247

%Straight Lines [id:da2810388578826776] 
\draw    (20,80) -- (110,80) ;
%Curve Lines [id:da22081540287467383] 
\draw [color={rgb, 255:red, 255; green, 0; blue, 0 }  ,draw opacity=1 ]   (90,80) .. controls (90.09,73.22) and (94.95,71.15) .. (97.86,73.8) .. controls (99.14,74.96) and (100.04,77.03) .. (100,80) ;
%Curve Lines [id:da4293773260994962] 
\draw [color={rgb, 255:red, 255; green, 0; blue, 0 }  ,draw opacity=1 ]   (50,80) .. controls (50.13,70.25) and (60.13,70.25) .. (60,80) ;
%Curve Lines [id:da8308921464950191] 
\draw [color={rgb, 255:red, 255; green, 0; blue, 0 }  ,draw opacity=1 ]   (30,80) .. controls (30.13,40) and (80.13,40) .. (80,80) ;
%Curve Lines [id:da6685643282137448] 
\draw [color={rgb, 255:red, 255; green, 0; blue, 0 }  ,draw opacity=1 ]   (40,80) .. controls (40,56) and (70.5,55.5) .. (70,80) ;
%Straight Lines [id:da857274728310966] 
\draw    (130,80) -- (220,80) ;
%Curve Lines [id:da11617904059769768] 
\draw [color={rgb, 255:red, 0; green, 0; blue, 255 }  ,draw opacity=1 ]   (140,80) .. controls (140.13,70.25) and (150.13,70.25) .. (150,80) ;
%Curve Lines [id:da1994461965609211] 
\draw [color={rgb, 255:red, 0; green, 0; blue, 255 }  ,draw opacity=1 ]   (179.99,80) .. controls (180.12,70.25) and (190.12,70.25) .. (189.99,80) ;
%Curve Lines [id:da5969644442115014] 
\draw [color={rgb, 255:red, 0; green, 0; blue, 255 }  ,draw opacity=1 ]   (170,80) .. controls (170,56) and (200.5,55.5) .. (200,80) ;
%Straight Lines [id:da9628030596331352] 
\draw    (80,160.5) -- (170,160.5) ;
%Curve Lines [id:da09062507861050284] 
\draw [color={rgb, 255:red, 0; green, 0; blue, 255 }  ,draw opacity=1 ]   (160,80) .. controls (160.13,40) and (210.13,40) .. (210,80) ;
%Curve Lines [id:da5416297746174711] 
\draw [color={rgb, 255:red, 126; green, 211; blue, 33 }  ,draw opacity=1 ]   (90,160) .. controls (90.22,140.33) and (90.11,120.05) .. (100,109.95) .. controls (109.89,99.84) and (118,100.37) .. (123.16,104.3) ;
%Curve Lines [id:da06482223242836205] 
\draw [color={rgb, 255:red, 126; green, 211; blue, 33 }  ,draw opacity=1 ]   (160,160) .. controls (160.22,140.11) and (159.89,119.95) .. (150,110) .. controls (140.11,100.05) and (131.22,98.89) .. (126,104) ;
%Curve Lines [id:da5853682397431738] 
\draw [color={rgb, 255:red, 126; green, 211; blue, 33 }  ,draw opacity=1 ]   (123.54,138.63) .. controls (120.79,135.13) and (119.88,125.21) .. (119.88,122.88) .. controls (119.88,120.54) and (119.79,110.96) .. (123.38,107.13) ;
%Curve Lines [id:da8469313690890307] 
\draw [color={rgb, 255:red, 126; green, 211; blue, 33 }  ,draw opacity=1 ]   (130,130) .. controls (130.64,126.57) and (130.91,121.51) .. (130.13,117.79) ;
%Curve Lines [id:da3975685311689918] 
\draw [color={rgb, 255:red, 126; green, 211; blue, 33 }  ,draw opacity=1 ]   (120,160) .. controls (120.15,155) and (119.04,144.79) .. (123.71,140.54) .. controls (128.38,136.29) and (129.13,133.46) .. (129.46,132.63) ;
%Curve Lines [id:da7417012669204899] 
\draw [color={rgb, 255:red, 126; green, 211; blue, 33 }  ,draw opacity=1 ]   (130,159.96) .. controls (130.15,154.96) and (131.29,145.07) .. (125.93,140.57) ;
%Curve Lines [id:da5887401980547481] 
\draw [color={rgb, 255:red, 126; green, 211; blue, 33 }  ,draw opacity=1 ]   (122,114.89) .. controls (129.68,110.47) and (150.21,125.31) .. (150,160.11) ;
%Curve Lines [id:da08493115414434205] 
\draw [color={rgb, 255:red, 126; green, 211; blue, 33 }  ,draw opacity=1 ]   (122.27,130.46) .. controls (127.16,127.11) and (140.1,134.88) .. (139.89,160) ;
%Curve Lines [id:da9675246088184256] 
\draw [color={rgb, 255:red, 126; green, 211; blue, 33 }  ,draw opacity=1 ]   (119.47,131.84) .. controls (119.16,132.05) and (110.21,135) .. (110,160) ;
%Curve Lines [id:da2527969169058081] 
\draw [color={rgb, 255:red, 126; green, 211; blue, 33 }  ,draw opacity=1 ]   (118.32,116.58) .. controls (116.84,117.11) and (100,125) .. (100,160) ;
%Curve Lines [id:da3028568043078126] 
\draw [color={rgb, 255:red, 126; green, 211; blue, 33 }  ,draw opacity=1 ]   (129.68,113.53) .. controls (128.74,110.05) and (125.37,105.84) .. (123.16,104.3) ;

% Text Node
\draw (65,90) node  [font=\footnotesize]  {$T_{\alpha }$};
% Text Node
\draw (175,90) node  [font=\footnotesize]  {$T_{\beta }$};
% Text Node
\draw (125,170.5) node  [font=\footnotesize]  {$T_{\gamma }$};

\end{tikzpicture}
    \caption{A 6-crossing tri-plane diagram of the 2-twist spun trefoil knot}
    \label{fig:tri-plane}
\end{figure}

In Section 2, we establish some necessary background on bridge trisections, tri-plane diagrams, and classical diagrams of unlinks. We also discuss the notion of ribbonness for a knotted sphere and its relation to the singularity set of a surface in $S^4$ immersed into an equatorial $S^3$. Section 3 establishes two lemmas regarding low-crossing unlink diagrams and presents the main theorem and some interesting further questions.

\vspace{0.2 cm}

\subsection*{Acknowledgments}

We would like to thank Alex Zupan and Maggie Miller for their correspondence and guidance throughout this project, and especially Jeffrey Meier both for his excellent advice and for providing the six-crossing diagram of the 2-twist spun trefoil.

SG was partially supported by NSF Award 2340465. The authors would like to thank the Trisectors Workshop 2024: Connections with Knotted Surfaces, where this project originated.
\section{Preliminaries}

Surface knots and links were studied via trisections for the first time by Meier and Zupan in \cite{mz}, following the work of Gay and Kirby on trisections of 4-manifolds \cite{gk}.

\begin{defn}[Bridge Trisection]
    A $(b;c_1,c_2,c_3)$ \textit{bridge trisection} $\mathcal{T}$ of a surface link $L$ in a 4-manifold $X$ is a decomposition $(X,L) = (X_1,D_1) \cup (X_2,D_2) \cup (X_3,D_3)$ with each $X_i$ a 4-dimensional handlebody such that 
    \begin{itemize}
        \item $X_1 \cup X_2 \cup X_3$ is a trisection of $X$,
        \item $D_i$ is a trivial $c_i$-disk system in $X_i$ with $\del D_i \subset \del X_i$, and
        \item Each pairwise intersection $(X_i,D_i) \cap (X_j,D_j)$ (with $i \ne j$) is a trivial tangle.
    \end{itemize}
\end{defn}

Note that a trivial system of disks is a collection of disks properly embedded in the relevant manifold with boundary such that all disks are simultaneously isotopic into the boundary.

We will call the triple $(T_\a,T_\b,T_\g) := (D_1 \cap D_2, D_2 \cap D_3, D_3 \cap D_1)$ for a bridge trisection $\mathcal{T}$ of surface link $L$ in $S^4$ (where the trisection of $S^4$ is the standard genus zero one) a \textit{tri-plane diagram} for $L$. Every surface link in $S^4$ admits a bridge trisection and tri-plane diagram, and these are unique up to certain moves outlined in \cite{mz}.

We see immediately from this definition that each pair $\del D_i = T_j \cup \overline{T_k}$ is an unlink inheriting a $b$-bridge splitting from the trisection. 

We will need to be very careful with our treatment of link diagrams, so we will give these a formal definition, calling attention to the fact that a link diagram is in fact a graph.

\begin{defn}
    A \textit{tangle diagram} (or, in the case of a closed tangle, \textit{knot or link diagram}) is a 4-valent planar graph where each vertex contains ``crossing information:" two edges incident to the vertex are broken to indicate an ``understrand." The unbroken edges together form an ``overstrand."
\end{defn}

Note that, by removing crossing information from a link diagram, we obtain a graph representing a number of different diagrams.

The most natural stratification of link diagrams and tangles is by the number of crossings they contain, and links are most commonly organized by the minimum of this value over all diagrams, called the \textit{crossing number} of a link. This is one of the best-studied invariants of classical links. It has a natural analogue for surface links and tri-plane diagrams.

\begin{defn}[Tri-Plane Crossing Number]
    Let $L$ be a surface link in $S^4$ described by a tri-plane diagram $\mathcal{D}= (T_\a,T_\b,T_\g)$. The \textit{crossing number of} $\mathcal{D}$ is $c(\mathcal{D}) := c(T_\a)+c(T_\b)+c(T_\g)$, where $c(T_i)$ is the crossing number of the tangle $T_i$. The \textit{crossing number} of $L$ is $\min\{c(\mathcal{D}):\mathcal{D}\text{ is a tri-plane diagram of L}\}$.
\end{defn}

Crossing number is invariant by construction, but is hard to calculate for a given surface. To the authors' knowledge, there are currently no known crossing numbers for non-trivial knotted surfaces. In the next section, we present the first (Cor \ref{cor:2-twist}).

To study crossing numbers of surface knots (and links), we will need to make careful consideration of non-standard unlink diagrams.
Recall a classical theorem of Reidemeister, that any two diagrams of a classical link $L$ (in, for example, $S^3$) are related by planar isotopy and a finite sequence of moves from Figure \ref{fig:reidemeister_moves}, which we will hereafter refer to as RI-, RII-, and RIII-moves. In particular, any diagram of a $c$-component unlink can be made standard (that is, made to look like $c$ circles embedded in the plane with no crossings) via such a sequence. We will call such a sequence of Reidemeister moves for an unlink diagram an \textit{untying} of the diagram.

\begin{figure}[h]
    \centering

\tikzset{every picture/.style={line width=0.75pt}} %set default line width to 0.75pt        

\begin{tikzpicture}[x=0.75pt,y=0.75pt,yscale=-1,xscale=1]
%uncomment if require: \path (0,232); %set diagram left start at 0, and has height of 232

%Curve Lines [id:da6509166836849047] 
\draw [line width=1.5]    (36.66,64.55) .. controls (28.67,58.63) and (20,51.37) .. (20,39.09) .. controls (20,26.81) and (32.08,19.92) .. (45,20) .. controls (57.91,20.08) and (70.33,26.68) .. (69.99,39.09) .. controls (69.66,51.5) and (61.66,58.37) .. (53.33,64.55) ;
%Straight Lines [id:da668175568191172] 
\draw [line width=1.5]    (53.33,77.27) -- (69.99,90) ;
%Straight Lines [id:da23052203497493828] 
\draw [line width=1.5]    (53.33,64.55) -- (20,90) ;
%Curve Lines [id:da4399319985993497] 
\draw [line width=1.5]    (170,70) .. controls (201,70.13) and (179.96,39.97) .. (200,40) .. controls (220.04,40.03) and (199.25,69.88) .. (230,70) ;
%Straight Lines [id:da8202820927136859] 
\draw    (93,51) -- (147,51) ;
\draw [shift={(150,51)}, rotate = 180] [fill={rgb, 255:red, 0; green, 0; blue, 0 }  ][line width=0.08]  [draw opacity=0] (8.93,-4.29) -- (0,0) -- (8.93,4.29) -- cycle    ;
\draw [shift={(90,51)}, rotate = 0] [fill={rgb, 255:red, 0; green, 0; blue, 0 }  ][line width=0.08]  [draw opacity=0] (8.93,-4.29) -- (0,0) -- (8.93,4.29) -- cycle    ;
%Curve Lines [id:da7380165976666576] 
\draw [line width=1.5]    (330,70) .. controls (320.29,60.57) and (320.57,50.86) .. (330,40) ;
%Curve Lines [id:da22574809539925744] 
\draw [line width=1.5]    (320,90) .. controls (350,69.78) and (350.22,40) .. (320,20) ;
%Straight Lines [id:da8825097451752648] 
\draw [line width=1.5]    (340,30) -- (350,20) ;
%Straight Lines [id:da4570441422955934] 
\draw [line width=1.5]    (340,80) -- (350,90) ;
%Straight Lines [id:da5021281294230975] 
\draw    (373,50) -- (437,50) ;
\draw [shift={(440,50)}, rotate = 180] [fill={rgb, 255:red, 0; green, 0; blue, 0 }  ][line width=0.08]  [draw opacity=0] (8.93,-4.29) -- (0,0) -- (8.93,4.29) -- cycle    ;
\draw [shift={(370,50)}, rotate = 0] [fill={rgb, 255:red, 0; green, 0; blue, 0 }  ][line width=0.08]  [draw opacity=0] (8.93,-4.29) -- (0,0) -- (8.93,4.29) -- cycle    ;
%Curve Lines [id:da5547133263864538] 
\draw [line width=1.5]    (460,90) .. controls (470.22,69.56) and (469.78,40.22) .. (460,20) ;
%Curve Lines [id:da2137688355868952] 
\draw [line width=1.5]    (490,90) .. controls (480.89,70.22) and (480,40) .. (490,20) ;
%Straight Lines [id:da46082568953523184] 
\draw [line width=1.5]    (180,180) -- (230,130) ;
%Straight Lines [id:da32428182571479436] 
\draw [line width=1.5]    (160,130) -- (170,140) ;
%Straight Lines [id:da4652361118644467] 
\draw [line width=1.5]    (200,170) -- (230,200) ;
%Straight Lines [id:da9091172139431306] 
\draw [line width=1.5]    (320,200) -- (370,150) ;
%Straight Lines [id:da4984084991157388] 
\draw [line width=1.5]    (320,130) -- (350,160) ;
%Straight Lines [id:da6331449994938027] 
\draw [line width=1.5]    (360,170) -- (370,180) ;
%Straight Lines [id:da9889839618273737] 
\draw    (243,160) -- (307,160) ;
\draw [shift={(310,160)}, rotate = 180] [fill={rgb, 255:red, 0; green, 0; blue, 0 }  ][line width=0.08]  [draw opacity=0] (8.93,-4.29) -- (0,0) -- (8.93,4.29) -- cycle    ;
\draw [shift={(240,160)}, rotate = 0] [fill={rgb, 255:red, 0; green, 0; blue, 0 }  ][line width=0.08]  [draw opacity=0] (8.93,-4.29) -- (0,0) -- (8.93,4.29) -- cycle    ;
%Straight Lines [id:da7835259056317385] 
\draw [line width=1.5]    (160,200) -- (170,190) ;
%Straight Lines [id:da5678192550038498] 
\draw [line width=1.5]    (180,150) -- (190,160) ;
%Curve Lines [id:da8087594069531903] 
\draw [line width=1.5]    (180,200) .. controls (170.89,180.22) and (170,150) .. (180,130) ;
%Curve Lines [id:da14874171675069214] 
\draw [line width=1.5]    (370,200) .. controls (380.22,179.56) and (379.78,150.22) .. (370,130) ;
%Straight Lines [id:da2888608105905427] 
\draw [line width=1.5]    (380,190) -- (390,200) ;
%Straight Lines [id:da9244523417661055] 
\draw [line width=1.5]    (380,140) -- (390,130) ;

% Text Node
\draw (120.5,60.5) node  [font=\footnotesize] [align=left] {\begin{minipage}[lt]{26.95pt}\setlength\topsep{0pt}
Type I
\end{minipage}};
% Text Node
\draw (406,60.5) node  [font=\footnotesize] [align=left] {\begin{minipage}[lt]{33.53pt}\setlength\topsep{0pt}
 Type II
\end{minipage}};
% Text Node
\draw (276,170.5) node  [font=\footnotesize] [align=left] {\begin{minipage}[lt]{33.53pt}\setlength\topsep{0pt}
Type III
\end{minipage}};

\end{tikzpicture}
    \caption{Reidemeister Moves}
    \label{fig:reidemeister_moves}
\end{figure}

An important feature of some 2-knots we will discuss here is \textit{ribbonness}.

\begin{defn}

A 2-knot is \textit{ribbon} if it is obtained by adding $k$ 1-handles to a trivial $(k+1)$ component 2-link. That is, a ribbon 2-knot is obtained by attaching ``thin tubes," i.e. copies of $S^1 \x I$ viewed as the boundary of a tubular neighborhood of some arc $\a$, to unknotted $S^2$'s by, for each boundary component $S^1 \x \{0\}$ or $S^1 \x \{1\}$, removing a trivial disk in the relevant $S^2$ and identifying the boundary of the puncture with the attaching circle.
    
\end{defn}

A projection of a ribbon 2-knot obtained by adding a single 1-handle to a trivial $2$ component 2-link is depicted in Figure \ref{fig:ribbon_knot_projection}. 

\begin{figure}[ht!]
\centering

\tikzset{every picture/.style={line width=0.75pt}} %set default line width to 0.75pt        

\begin{tikzpicture}[x=0.75pt,y=0.75pt,yscale=-1,xscale=1]
%uncomment if require: \path (0,161); %set diagram left start at 0, and has height of 161

%Shape: Arc [id:dp02144571199392642] 
\draw  [draw opacity=0] (127.38,126.94) .. controls (117.12,135.11) and (104.13,140) .. (90,140) .. controls (56.86,140) and (30,113.14) .. (30,80) .. controls (30,46.86) and (56.86,20) .. (90,20) .. controls (115.15,20) and (136.69,35.48) .. (145.61,57.43) -- (90,80) -- cycle ; \draw   (127.38,126.94) .. controls (117.12,135.11) and (104.13,140) .. (90,140) .. controls (56.86,140) and (30,113.14) .. (30,80) .. controls (30,46.86) and (56.86,20) .. (90,20) .. controls (115.15,20) and (136.69,35.48) .. (145.61,57.43) ;  
%Curve Lines [id:da4748784210022726] 
\draw    (30,80) .. controls (60.27,110.18) and (120.27,110) .. (150,80) ;
%Curve Lines [id:da9538398841839696] 
\draw  [dash pattern={on 4.5pt off 4.5pt}]  (30,80) .. controls (53.43,56.14) and (89.14,58.43) .. (110,60) ;
%Shape: Arc [id:dp1270146238297748] 
\draw  [draw opacity=0] (253.89,22.19) .. controls (259.02,20.76) and (264.42,20) .. (270,20) .. controls (303.14,20) and (330,46.86) .. (330,80) .. controls (330,113.14) and (303.14,140) .. (270,140) .. controls (255.71,140) and (242.58,135) .. (232.27,126.66) -- (270,80) -- cycle ; \draw   (253.89,22.19) .. controls (259.02,20.76) and (264.42,20) .. (270,20) .. controls (303.14,20) and (330,46.86) .. (330,80) .. controls (330,113.14) and (303.14,140) .. (270,140) .. controls (255.71,140) and (242.58,135) .. (232.27,126.66) ;  
%Curve Lines [id:da5762121566752869] 
\draw    (210,80) .. controls (229.44,98.11) and (249,100.56) .. (251.22,101.22) ;
%Curve Lines [id:da13684145767278266] 
\draw  [dash pattern={on 4.5pt off 4.5pt}]  (251.44,59.89) .. controls (258.11,56.78) and (304.33,54.78) .. (330,80) ;
%Curve Lines [id:da5234708335319881] 
\draw    (110.83,121.33) .. controls (118.33,121.33) and (121.89,124.89) .. (128.07,126.68) .. controls (177.05,140.85) and (233.57,128.58) .. (250,120) .. controls (273.17,110.33) and (267.17,69.33) .. (240,50) .. controls (212.83,30.67) and (138.83,75.67) .. (120,50) .. controls (102.96,26.77) and (212.06,16.65) .. (250,30) .. controls (253.99,31.4) and (258.83,36) .. (260,40) ;
%Curve Lines [id:da5302091357330941] 
\draw    (120.5,107.33) .. controls (121.67,112) and (126.75,114.95) .. (131.68,116.75) .. controls (164.21,128.63) and (235.33,121.58) .. (248.5,110) .. controls (258.17,102) and (251.5,74.33) .. (230,60) .. controls (202.83,40.67) and (146.5,86.67) .. (110,60) .. controls (80.17,13) and (216.83,8.42) .. (258.5,24) .. controls (262.6,25.53) and (266.17,29) .. (270,30) ;
%Curve Lines [id:da9901118305687185] 
\draw  [dash pattern={on 0.84pt off 2.51pt}]  (110.83,121.33) .. controls (104.83,119.67) and (116.5,102.67) .. (120.5,107.33) ;
%Curve Lines [id:da5360030835620256] 
\draw  [dash pattern={on 0.84pt off 2.51pt}]  (260,40) .. controls (263.33,44) and (277.67,33.67) .. (270,30) ;
%Curve Lines [id:da587489671663693] 
\draw  [dash pattern={on 0.75pt off 1.5pt}]  (263.67,102.33) .. controls (260.78,106.56) and (254.33,106.33) .. (253.17,100.89) ;
%Curve Lines [id:da3746671301819151] 
\draw    (263.67,102.33) .. controls (262.33,97.67) and (255.67,97.89) .. (253.17,100.89) ;
%Curve Lines [id:da17662992946047562] 
\draw  [dash pattern={on 0.75pt off 1.5pt}]  (230,60) .. controls (239.25,62.25) and (243,56.75) .. (240,50) ;
%Curve Lines [id:da6796747557323852] 
\draw    (230,60) .. controls (227.75,53.75) and (231,49.25) .. (240,50) ;
%Curve Lines [id:da36111003590734136] 
\draw  [dash pattern={on 0.75pt off 1.5pt}]  (110,60) .. controls (119,61.5) and (121.75,55.75) .. (120,50) ;
%Curve Lines [id:da581206198306846] 
\draw    (110,60) .. controls (107.75,52) and (113.25,47.5) .. (120,50) ;
%Shape: Arc [id:dp08758063599012444] 
\draw  [draw opacity=0] (223.91,118.42) .. controls (215.22,108.01) and (210,94.61) .. (210,80) .. controls (210,57.31) and (222.59,37.56) .. (241.17,27.37) -- (270,80) -- cycle ; \draw   (223.91,118.42) .. controls (215.22,108.01) and (210,94.61) .. (210,80) .. controls (210,57.31) and (222.59,37.56) .. (241.17,27.37) ;  
%Curve Lines [id:da3730159363517793] 
\draw    (265.22,102.78) .. controls (266.78,103.11) and (300,107) .. (330,80) ;
%Curve Lines [id:da3686219944430248] 
\draw  [dash pattern={on 4.5pt off 4.5pt}]  (210,80) .. controls (217,72.78) and (227.67,68.11) .. (235.67,65.44) ;
%Shape: Arc [id:dp4262782788169547] 
\draw  [draw opacity=0] (148.93,68.66) .. controls (149.63,72.33) and (150,76.12) .. (150,80) .. controls (150,94.34) and (144.97,107.51) .. (136.57,117.83) -- (90,80) -- cycle ; \draw   (148.93,68.66) .. controls (149.63,72.33) and (150,76.12) .. (150,80) .. controls (150,94.34) and (144.97,107.51) .. (136.57,117.83) ;  
%Curve Lines [id:da4181275649622035] 
\draw  [dash pattern={on 4.5pt off 4.5pt}]  (135.43,68.71) .. controls (142.57,72.43) and (145.43,76.14) .. (150,80) ;

\end{tikzpicture}
\caption{\label{fig:ribbon_knot_projection} A projection of a ribbon 2-knot with two trivial spheres and one 1-handle.}
\end{figure}

The most relevant fact about ribbon surfaces is that a 2-knot in $S^4$ is ribbon if and only if it has an immersion into an equatorial $S^3$ without any triple points (that is, without any points in the projection with preimage set of size three) \cite{S05}. To see this, refer first to \cite{yaj} (where a ``symmetric ribbon sphere" is in our language a ribbon 2-knot and a ``simply knotted sphere" is a 2-knot with an immersion into $S^3$ without triple or branch points) and then deal with branch points in the following way.

\begin{rmk}
    A quick note on notation: Throughout this paper, we will be using ``immersion'' to be what Satoh, in his papers, calls a ``diagram", that is, a projection into $S^3$ with only double points, triple points, and branch points as its singular points. In particular, we do \textit{not} require them \textit{a priori} to be immersions in the traditional sense of the differential being injective.
\end{rmk}
   
Suppose that a 2-knot $K$ is immersed in $S^3$ such that it contains no triple points of self-intersection. Then the set of all self-intersections consists of a collection of simple closed curves and arcs. Each curve consists of double points, and each arc's interior consists of double points, while each arc's endpoints are branch points. Notably, a given arc is disjoint from all others and from all curves. If an arc $\a$ appears as in Figure \ref{fig:Rosec}, it can be eliminated via a Roseman move (of type c) \cite{rose} as shown. In any other case, push off the interior of $\a$ to find an arc $\a'$ sharing endpoints with $\a$. Then $\a'$ traces out the path of a Roseman move of type d, as in Figure \ref{fig:Rosed}. Thus, any immersion of a 2-knot $K$ without triple points is isotopic to one without branch points, and thus $K$ is ribbon.

\begin{figure}[h!]
    \centering

\tikzset{every picture/.style={line width=0.75pt}} %set default line width to 0.75pt        

\begin{tikzpicture}[x=0.75pt,y=0.75pt,yscale=-1,xscale=1]
%uncomment if require: \path (0,185); %set diagram left start at 0, and has height of 185

%Straight Lines [id:da4056926838440925] 
\draw [color={rgb, 255:red, 74; green, 144; blue, 226 }  ,draw opacity=1 ][line width=0.75]    (90.25,90.25) -- (150.25,110.25) ;
%Curve Lines [id:da9821779136664586] 
\draw    (90,90) .. controls (104.33,63.67) and (141.33,81) .. (150,110) ;
%Curve Lines [id:da48927220995365983] 
\draw    (119.22,99.67) .. controls (110.56,94.11) and (114.78,81.89) .. (120.33,78.33) ;
%Curve Lines [id:da2317646513525886] 
\draw  [dash pattern={on 0.84pt off 2.51pt}]  (119.22,99.67) .. controls (127.44,95.44) and (125.67,82.78) .. (120.33,78.33) ;
%Curve Lines [id:da2283795871598272] 
\draw    (129.67,131) .. controls (133.89,114.78) and (143.22,120.11) .. (150.25,110.25) ;
%Curve Lines [id:da0537626083120234] 
\draw    (69.67,111) .. controls (73.89,94.78) and (83.22,100.11) .. (90.25,90.25) ;
%Straight Lines [id:da2009027673684649] 
\draw  [dash pattern={on 4.5pt off 4.5pt}]  (69.67,111) -- (129.67,131) ;
%Curve Lines [id:da5454333335196146] 
\draw    (155,113) .. controls (159.67,115) and (167.22,111.44) .. (170.11,120.11) ;
%Straight Lines [id:da8182912288916743] 
\draw  [dash pattern={on 0.84pt off 2.51pt}]  (147.89,113.44) -- (170.11,120.11) ;
%Curve Lines [id:da16664559524238232] 
\draw  [dash pattern={on 4.5pt off 4.5pt}]  (129.67,131) .. controls (142.11,138.33) and (177.22,127.89) .. (170.11,120.11) ;
%Curve Lines [id:da8358244189148266] 
\draw    (160.78,129.89) .. controls (159.89,119) and (156.78,111.67) .. (150.25,110.25) ;
%Curve Lines [id:da1341122389514533] 
\draw    (69.89,99.67) .. controls (71.44,90.11) and (85,88.33) .. (90.25,90.25) ;
%Curve Lines [id:da6160282687362024] 
\draw  [dash pattern={on 4.5pt off 4.5pt}]  (69.67,111) .. controls (67.89,106.78) and (67.89,102.11) .. (69.89,99.67) ;
%Shape: Circle [id:dp6780784294564843] 
\draw  [color={rgb, 255:red, 74; green, 144; blue, 226 }  ,draw opacity=1 ][fill={rgb, 255:red, 74; green, 144; blue, 226 }  ,fill opacity=1 ] (148.75,110) .. controls (148.75,109.31) and (149.31,108.75) .. (150,108.75) .. controls (150.69,108.75) and (151.25,109.31) .. (151.25,110) .. controls (151.25,110.69) and (150.69,111.25) .. (150,111.25) .. controls (149.31,111.25) and (148.75,110.69) .. (148.75,110) -- cycle ;
%Shape: Circle [id:dp560206654581342] 
\draw  [color={rgb, 255:red, 74; green, 144; blue, 226 }  ,draw opacity=1 ][fill={rgb, 255:red, 74; green, 144; blue, 226 }  ,fill opacity=1 ] (89,90.25) .. controls (89,89.56) and (89.56,89) .. (90.25,89) .. controls (90.94,89) and (91.5,89.56) .. (91.5,90.25) .. controls (91.5,90.94) and (90.94,91.5) .. (90.25,91.5) .. controls (89.56,91.5) and (89,90.94) .. (89,90.25) -- cycle ;
%Straight Lines [id:da7884544797187594] 
\draw    (190,110) -- (238,110) ;
\draw [shift={(240,110)}, rotate = 180] [color={rgb, 255:red, 0; green, 0; blue, 0 }  ][line width=0.75]    (10.93,-3.29) .. controls (6.95,-1.4) and (3.31,-0.3) .. (0,0) .. controls (3.31,0.3) and (6.95,1.4) .. (10.93,3.29)   ;
%Curve Lines [id:da6080364482674067] 
\draw    (261.53,106.18) .. controls (284.33,60.83) and (349,87.83) .. (358,133.17) ;
%Curve Lines [id:da0015373514300318636] 
\draw    (291.31,127.51) .. controls (290,109.83) and (298,89.83) .. (311.97,84.84) ;
%Straight Lines [id:da09976783510266662] 
\draw  [dash pattern={on 4.5pt off 4.5pt}]  (261.31,117.51) -- (321.31,137.51) ;
%Curve Lines [id:da3177557507340063] 
\draw  [dash pattern={on 4.5pt off 4.5pt}]  (321.31,137.51) .. controls (329.6,139.8) and (352.8,137.4) .. (358,133.17) ;
%Curve Lines [id:da4960016469311087] 
\draw  [dash pattern={on 4.5pt off 4.5pt}]  (261.31,117.51) .. controls (258,115.83) and (259.53,108.62) .. (261.53,106.18) ;

% Text Node
\draw (133,95) node [anchor=north west][inner sep=0.75pt]  [font=\footnotesize,color={rgb, 255:red, 74; green, 144; blue, 226 }  ,opacity=1 ]  {$\alpha $};

\end{tikzpicture}
    \caption{A Roseman move of type c}
    \label{fig:Rosec}
\end{figure}

\begin{figure}[h!]
    \centering

\tikzset{every picture/.style={line width=0.75pt}} %set default line width to 0.75pt        

\begin{tikzpicture}[x=0.75pt,y=0.75pt,yscale=-1,xscale=1]
%uncomment if require: \path (0,185); %set diagram left start at 0, and has height of 185

%Curve Lines [id:da17280132834359896] 
\draw [color={rgb, 255:red, 208; green, 2; blue, 27 }  ,draw opacity=1 ]   (287.25,104.25) .. controls (291.33,108.17) and (300.33,110.83) .. (306.5,111) ;
%Curve Lines [id:da2388876792673441] 
\draw  [dash pattern={on 0.84pt off 2.51pt}]  (496.28,102.48) .. controls (498.63,105.74) and (498,109.63) .. (495.78,110.98) ;
%Straight Lines [id:da8148836448304375] 
\draw [color={rgb, 255:red, 74; green, 144; blue, 226 }  ,draw opacity=1 ][line width=0.75]  [dash pattern={on 0.84pt off 2.51pt}]  (321.17,103.67) -- (340.5,110.25) ;
%Curve Lines [id:da6095338969486277] 
\draw [color={rgb, 255:red, 208; green, 2; blue, 27 }  ,draw opacity=1 ]   (90.25,90.25) .. controls (86.14,106.14) and (119.57,119) .. (150,110) ;
%Curve Lines [id:da09459170294154395] 
\draw    (137.35,83) .. controls (144.15,92.2) and (144.8,94.05) .. (150.25,110.25) ;
%Curve Lines [id:da1889485424916323] 
\draw    (90.25,90.25) .. controls (97,85) and (98.6,78.2) .. (103.8,75.4) ;
%Curve Lines [id:da025651406500206808] 
\draw  [dash pattern={on 0.84pt off 2.51pt}]  (119.22,99.67) .. controls (125.1,96) and (127.3,88) .. (128.1,84.2) ;
%Curve Lines [id:da7740652122540935] 
\draw    (69.89,99.67) .. controls (71.44,90.11) and (85,88.33) .. (90.25,90.25) ;
%Straight Lines [id:da46434522069990813] 
\draw [color={rgb, 255:red, 74; green, 144; blue, 226 }  ,draw opacity=1 ][line width=0.75]    (90.25,90.25) -- (150.25,110.25) ;
%Curve Lines [id:da23759510732863742] 
\draw    (129.67,131) .. controls (133.89,114.78) and (143.22,120.11) .. (150.25,110.25) ;
%Curve Lines [id:da4142022283692274] 
\draw    (69.67,111) .. controls (73.89,94.78) and (83.22,100.11) .. (90.25,90.25) ;
%Straight Lines [id:da8367618652992143] 
\draw  [dash pattern={on 4.5pt off 4.5pt}]  (69.67,111) -- (129.67,131) ;
%Curve Lines [id:da6108042818659741] 
\draw    (155,113) .. controls (159.67,115) and (167.22,111.44) .. (170.11,120.11) ;
%Straight Lines [id:da02681604040503749] 
\draw  [dash pattern={on 0.84pt off 2.51pt}]  (147.89,113.44) -- (170.11,120.11) ;
%Curve Lines [id:da13196983581825672] 
\draw  [dash pattern={on 4.5pt off 4.5pt}]  (129.67,131) .. controls (142.11,138.33) and (177.22,127.89) .. (170.11,120.11) ;
%Curve Lines [id:da5542810489606698] 
\draw    (160.78,129.89) .. controls (159.89,119) and (156.78,111.67) .. (150.25,110.25) ;
%Curve Lines [id:da4527972059566566] 
\draw  [dash pattern={on 4.5pt off 4.5pt}]  (69.67,111) .. controls (67.89,106.78) and (67.89,102.11) .. (69.89,99.67) ;
%Shape: Circle [id:dp4384700383552008] 
\draw  [color={rgb, 255:red, 74; green, 144; blue, 226 }  ,draw opacity=1 ][fill={rgb, 255:red, 74; green, 144; blue, 226 }  ,fill opacity=1 ] (148.75,110) .. controls (148.75,109.31) and (149.31,108.75) .. (150,108.75) .. controls (150.69,108.75) and (151.25,109.31) .. (151.25,110) .. controls (151.25,110.69) and (150.69,111.25) .. (150,111.25) .. controls (149.31,111.25) and (148.75,110.69) .. (148.75,110) -- cycle ;
%Straight Lines [id:da9965887924736889] 
\draw    (190,110) -- (238,110) ;
\draw [shift={(240,110)}, rotate = 180] [color={rgb, 255:red, 0; green, 0; blue, 0 }  ][line width=0.75]    (10.93,-3.29) .. controls (6.95,-1.4) and (3.31,-0.3) .. (0,0) .. controls (3.31,0.3) and (6.95,1.4) .. (10.93,3.29)   ;
%Shape: Circle [id:dp8398862056524166] 
\draw  [color={rgb, 255:red, 74; green, 144; blue, 226 }  ,draw opacity=1 ][fill={rgb, 255:red, 74; green, 144; blue, 226 }  ,fill opacity=1 ] (89,90.25) .. controls (89,89.56) and (89.56,89) .. (90.25,89) .. controls (90.94,89) and (91.5,89.56) .. (91.5,90.25) .. controls (91.5,90.94) and (90.94,91.5) .. (90.25,91.5) .. controls (89.56,91.5) and (89,90.94) .. (89,90.25) -- cycle ;
%Curve Lines [id:da3084393723286414] 
\draw    (119.22,99.67) .. controls (110.56,94.11) and (112.9,85.4) .. (113.1,81) ;
%Curve Lines [id:da8777030048545639] 
\draw  [dash pattern={on 4.5pt off 4.5pt}]  (103.8,75.4) .. controls (105.3,81.2) and (131.1,86) .. (137.35,83) ;
%Curve Lines [id:da18605793599198195] 
\draw  [dash pattern={on 4.5pt off 4.5pt}]  (103.8,75.4) .. controls (111.5,70.6) and (142.7,77.6) .. (137.35,83) ;
%Curve Lines [id:da47844145074761935] 
\draw    (128.1,84.2) .. controls (128.7,82.8) and (129.1,80.8) .. (128.9,76.8) ;
%Curve Lines [id:da4879258088477032] 
\draw    (327.6,83) .. controls (330.83,87.17) and (332.17,93) .. (336.33,97.17) ;
%Curve Lines [id:da6663138473699474] 
\draw    (286.94,82.82) .. controls (291.06,79.76) and (290.59,77.18) .. (294.05,75.4) ;
%Curve Lines [id:da1620547902102576] 
\draw  [dash pattern={on 0.84pt off 2.51pt}]  (309.47,99.67) .. controls (315.35,96) and (317.55,88) .. (318.35,84.2) ;
%Curve Lines [id:da4955067634858229] 
\draw    (260.14,99.67) .. controls (261.69,90.11) and (275.25,88.33) .. (280.5,90.25) ;
%Straight Lines [id:da047146236344133396] 
\draw [color={rgb, 255:red, 74; green, 144; blue, 226 }  ,draw opacity=1 ][line width=0.75]    (280.5,90.25) -- (321.17,103.67) ;
%Curve Lines [id:da22535624519408304] 
\draw    (319.92,131) .. controls (324.14,114.78) and (333.47,120.11) .. (340.5,110.25) ;
%Curve Lines [id:da5348719497291272] 
\draw    (259.92,111) .. controls (264.14,94.78) and (273.47,100.11) .. (280.5,90.25) ;
%Straight Lines [id:da4189039003121463] 
\draw  [dash pattern={on 4.5pt off 4.5pt}]  (259.92,111) -- (319.92,131) ;
%Curve Lines [id:da4201200368064031] 
\draw    (345.25,113) .. controls (349.92,115) and (357.47,111.44) .. (360.36,120.11) ;
%Straight Lines [id:da21885581719615943] 
\draw  [dash pattern={on 0.84pt off 2.51pt}]  (338.14,113.44) -- (360.36,120.11) ;
%Curve Lines [id:da48471053709382317] 
\draw  [dash pattern={on 4.5pt off 4.5pt}]  (319.92,131) .. controls (332.36,138.33) and (367.47,127.89) .. (360.36,120.11) ;
%Curve Lines [id:da020705852101041144] 
\draw    (351.03,129.89) .. controls (350.14,119) and (347.03,111.67) .. (340.5,110.25) ;
%Curve Lines [id:da6792647672928805] 
\draw  [dash pattern={on 4.5pt off 4.5pt}]  (259.92,111) .. controls (258.14,106.78) and (258.14,102.11) .. (260.14,99.67) ;
%Curve Lines [id:da24056107364802892] 
\draw    (309.47,99.67) .. controls (300.81,94.11) and (303.15,85.4) .. (303.35,81) ;
%Curve Lines [id:da3173398223995242] 
\draw  [dash pattern={on 4.5pt off 4.5pt}]  (294.05,75.4) .. controls (295.55,81.2) and (321.35,86) .. (327.6,83) ;
%Curve Lines [id:da6791666197924998] 
\draw  [dash pattern={on 4.5pt off 4.5pt}]  (294.05,75.4) .. controls (301.75,70.6) and (332.95,77.6) .. (327.6,83) ;
%Curve Lines [id:da806135826623957] 
\draw    (318.35,84.2) .. controls (318.95,82.8) and (319.35,80.8) .. (319.15,76.8) ;
%Curve Lines [id:da22251496582685237] 
\draw [color={rgb, 255:red, 74; green, 144; blue, 226 }  ,draw opacity=1 ]   (280.5,90.25) .. controls (280.12,92.82) and (280.59,96) .. (287.25,104.25) ;
%Curve Lines [id:da8681496470015561] 
\draw [color={rgb, 255:red, 74; green, 144; blue, 226 }  ,draw opacity=1 ]   (306.5,111) .. controls (319.25,114.75) and (330.5,113.5) .. (340.5,110.25) ;
%Shape: Circle [id:dp8603009974475522] 
\draw  [color={rgb, 255:red, 74; green, 144; blue, 226 }  ,draw opacity=1 ][fill={rgb, 255:red, 74; green, 144; blue, 226 }  ,fill opacity=1 ] (286,104.25) .. controls (286,103.56) and (286.56,103) .. (287.25,103) .. controls (287.94,103) and (288.5,103.56) .. (288.5,104.25) .. controls (288.5,104.94) and (287.94,105.5) .. (287.25,105.5) .. controls (286.56,105.5) and (286,104.94) .. (286,104.25) -- cycle ;
%Shape: Circle [id:dp7249755967031284] 
\draw  [color={rgb, 255:red, 74; green, 144; blue, 226 }  ,draw opacity=1 ][fill={rgb, 255:red, 74; green, 144; blue, 226 }  ,fill opacity=1 ] (305.25,111) .. controls (305.25,110.31) and (305.81,109.75) .. (306.5,109.75) .. controls (307.19,109.75) and (307.75,110.31) .. (307.75,111) .. controls (307.75,111.69) and (307.19,112.25) .. (306.5,112.25) .. controls (305.81,112.25) and (305.25,111.69) .. (305.25,111) -- cycle ;
%Curve Lines [id:da974992303241054] 
\draw    (336.33,97.17) .. controls (341.5,102.17) and (309.17,105.33) .. (306.5,111) ;
%Curve Lines [id:da7238860323634725] 
\draw    (336.33,97.17) .. controls (341.17,99.33) and (345.67,105.67) .. (340.5,110.25) ;
%Curve Lines [id:da12282902785168526] 
\draw    (321.33,104.67) .. controls (323.83,107.17) and (323.33,111.67) .. (320.83,113.17) ;
%Curve Lines [id:da0834593175664392] 
\draw  [dash pattern={on 0.84pt off 2.51pt}]  (321.33,104.67) .. controls (319,107) and (318.83,110.67) .. (320.83,113.17) ;
%Curve Lines [id:da7911648588249378] 
\draw    (286.94,82.82) .. controls (278.47,88.47) and (291.42,91.32) .. (287.25,104.25) ;
%Curve Lines [id:da537136858910889] 
\draw    (286.94,82.82) .. controls (282.68,82.16) and (279.42,86.79) .. (280.5,90.25) ;
%Curve Lines [id:da1677657845720114] 
\draw  [dash pattern={on 0.84pt off 2.51pt}]  (285.11,90.68) .. controls (287.09,93.01) and (285.08,96.42) .. (282.48,97.53) ;
%Curve Lines [id:da9792472310676648] 
\draw    (285.11,90.68) .. controls (282.99,91.63) and (280.59,95.18) .. (282.48,97.53) ;
%Curve Lines [id:da05565010484550614] 
\draw    (331.5,90.77) .. controls (326.86,93.32) and (327.86,100.68) .. (328.59,102.59) ;
%Curve Lines [id:da7985384115144029] 
\draw  [dash pattern={on 0.84pt off 2.51pt}]  (330.83,106.96) .. controls (329.68,105.59) and (329.05,105.68) .. (328.59,102.68) ;
%Curve Lines [id:da36356329845207525] 
\draw  [dash pattern={on 0.84pt off 2.51pt}]  (330.83,106.87) .. controls (334.86,102.95) and (335.14,93.41) .. (331.5,90.77) ;
%Curve Lines [id:da09583645938042984] 
\draw    (289.86,80.09) .. controls (287.32,82.95) and (287.5,91.59) .. (289.68,93.32) ;
%Curve Lines [id:da8230847415007865] 
\draw  [dash pattern={on 0.84pt off 2.51pt}]  (289.68,93.32) .. controls (293.71,89.41) and (293.5,82.72) .. (289.86,80.09) ;
%Straight Lines [id:da8136577929485114] 
\draw [color={rgb, 255:red, 74; green, 144; blue, 226 }  ,draw opacity=1 ][line width=0.75]  [dash pattern={on 0.84pt off 2.51pt}]  (511.17,103.8) -- (530.5,110.38) ;
%Straight Lines [id:da6678558850742716] 
\draw    (380,110.13) -- (428,110.13) ;
\draw [shift={(430,110.13)}, rotate = 180] [color={rgb, 255:red, 0; green, 0; blue, 0 }  ][line width=0.75]    (10.93,-3.29) .. controls (6.95,-1.4) and (3.31,-0.3) .. (0,0) .. controls (3.31,0.3) and (6.95,1.4) .. (10.93,3.29)   ;
%Curve Lines [id:da9278376823463008] 
\draw    (517.6,83.13) .. controls (520.83,87.3) and (522.17,93.13) .. (526.33,97.3) ;
%Curve Lines [id:da9041266903061229] 
\draw    (476.94,82.96) .. controls (481.06,79.9) and (480.59,77.31) .. (484.05,75.53) ;
%Curve Lines [id:da6712101912773961] 
\draw  [dash pattern={on 0.84pt off 2.51pt}]  (499.47,99.8) .. controls (505.35,96.13) and (507.55,88.13) .. (508.35,84.33) ;
%Curve Lines [id:da023432283953845445] 
\draw    (450.14,99.8) .. controls (451.69,90.24) and (465.25,88.46) .. (470.5,90.38) ;
%Straight Lines [id:da8840553126513105] 
\draw [color={rgb, 255:red, 74; green, 144; blue, 226 }  ,draw opacity=1 ][line width=0.75]    (470.5,90.38) -- (511.17,103.8) ;
%Curve Lines [id:da6856561806738547] 
\draw    (509.92,131.13) .. controls (514.14,114.91) and (523.47,120.24) .. (530.5,110.38) ;
%Curve Lines [id:da31266432958895174] 
\draw    (449.92,111.13) .. controls (454.14,94.91) and (463.47,100.24) .. (470.5,90.38) ;
%Straight Lines [id:da10411387967987684] 
\draw  [dash pattern={on 4.5pt off 4.5pt}]  (449.92,111.13) -- (509.92,131.13) ;
%Curve Lines [id:da8069189768174408] 
\draw    (535.25,113.13) .. controls (539.92,115.13) and (547.47,111.58) .. (550.36,120.24) ;
%Straight Lines [id:da5166251735740195] 
\draw  [dash pattern={on 0.84pt off 2.51pt}]  (528.14,113.58) -- (550.36,120.24) ;
%Curve Lines [id:da6136283989016535] 
\draw  [dash pattern={on 4.5pt off 4.5pt}]  (509.92,131.13) .. controls (522.36,138.46) and (557.47,128.02) .. (550.36,120.24) ;
%Curve Lines [id:da537013687972558] 
\draw    (541.03,130.02) .. controls (540.14,119.13) and (537.03,111.8) .. (530.5,110.38) ;
%Curve Lines [id:da03567967389160376] 
\draw  [dash pattern={on 4.5pt off 4.5pt}]  (449.92,111.13) .. controls (448.14,106.91) and (448.14,102.24) .. (450.14,99.8) ;
%Curve Lines [id:da5155817164719191] 
\draw    (499.47,99.8) .. controls (490.81,94.24) and (493.15,85.53) .. (493.35,81.13) ;
%Curve Lines [id:da03361218527411658] 
\draw  [dash pattern={on 4.5pt off 4.5pt}]  (484.05,75.53) .. controls (485.55,81.33) and (511.35,86.13) .. (517.6,83.13) ;
%Curve Lines [id:da5107046357000468] 
\draw  [dash pattern={on 4.5pt off 4.5pt}]  (484.05,75.53) .. controls (491.75,70.73) and (522.95,77.73) .. (517.6,83.13) ;
%Curve Lines [id:da020991979273765726] 
\draw    (508.35,84.33) .. controls (508.95,82.93) and (509.35,80.93) .. (509.15,76.93) ;
%Curve Lines [id:da022508363150097188] 
\draw [color={rgb, 255:red, 74; green, 144; blue, 226 }  ,draw opacity=1 ]   (470.5,90.38) .. controls (469.37,108.58) and (511.89,118.47) .. (530.5,110.38) ;
%Curve Lines [id:da6991600512751911] 
\draw    (526.33,97.3) .. controls (531.17,99.46) and (535.67,105.8) .. (530.5,110.38) ;
%Curve Lines [id:da8484688652588317] 
\draw    (496.28,102.48) .. controls (493.47,104.79) and (493.16,108.58) .. (495.78,110.98) ;
%Curve Lines [id:da6419095479422362] 
\draw    (476.94,82.96) .. controls (472.68,82.29) and (469.42,86.92) .. (470.5,90.38) ;
%Curve Lines [id:da7424746215266707] 
\draw  [dash pattern={on 0.84pt off 2.51pt}]  (480.95,96.63) .. controls (483.47,99.05) and (481.37,103.74) .. (480.11,104.47) ;
%Curve Lines [id:da1420889391332868] 
\draw    (480.95,96.63) .. controls (479.26,97.47) and (477.68,102.11) .. (480.11,104.47) ;
%Curve Lines [id:da9387585697605212] 
\draw    (521.5,90.9) .. controls (516.86,93.45) and (517.86,100.81) .. (518.59,102.72) ;
%Curve Lines [id:da6938188204861125] 
\draw  [dash pattern={on 0.84pt off 2.51pt}]  (520.83,107.09) .. controls (519.68,105.72) and (519.05,105.81) .. (518.59,102.81) ;
%Curve Lines [id:da4490775663080052] 
\draw  [dash pattern={on 0.84pt off 2.51pt}]  (520.83,107) .. controls (524.86,103.09) and (525.14,93.54) .. (521.5,90.9) ;
%Curve Lines [id:da15525336641095044] 
\draw    (479.86,80.22) .. controls (477.32,83.09) and (477.5,91.72) .. (479.68,93.45) ;
%Curve Lines [id:da46724405776064226] 
\draw  [dash pattern={on 0.84pt off 2.51pt}]  (479.68,93.45) .. controls (483.71,89.54) and (483.5,82.86) .. (479.86,80.22) ;
%Curve Lines [id:da0993917771969165] 
\draw    (526.33,97.3) .. controls (526.27,114.33) and (456,95.53) .. (476.94,82.96) ;

% Text Node
\draw (133,95) node [anchor=north west][inner sep=0.75pt]  [font=\footnotesize,color={rgb, 255:red, 74; green, 144; blue, 226 }  ,opacity=1 ]  {$\alpha $};
% Text Node
\draw (78.33,98.4) node [anchor=north west][inner sep=0.75pt]  [font=\footnotesize,color={rgb, 255:red, 208; green, 2; blue, 27 }  ,opacity=1 ]  {$\alpha '$};

\end{tikzpicture}
    \caption{A Roseman move of type d}
    \label{fig:Rosed}
\end{figure}

\section{Crossing Numbers for Surface Knots}

We seek to understand the triple points of a 2-knot $K$'s immersion into $S^3$. We will first require two lemmas controlling the number of RIII-moves used to untie low-crossing unlink diagrams.

\begin{lemma}

Every unlink diagram with at most four crossings can be untied using no RIII-moves.

\label{lemma:four_crossing_one_RIII}
\end{lemma}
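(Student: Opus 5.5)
We argue by strong induction on the number $n$ of crossings, showing that any unlink diagram $D$ with $n\le 4$ crossings can be reduced, using only RI- and RII-moves that never increase the crossing number, to a crossingless diagram. The base case $n=0$ is immediate: the components are disjoint embedded circles, and a planar isotopy makes the diagram standard. For the inductive step it suffices to find, in any unlink diagram with $1\le n\le 4$ crossings, either a crossing removable by an RI-move (a \emph{kink}, i.e.\ a loop edge of the underlying 4-valent graph) or a bigon whose two edges are one over-edge and one under-edge, so that an RII-move removes two crossings. The result still represents the unlink and has fewer crossings, so induction applies.

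First reduce to connected diagrams. If the underlying graph is disconnected, each connected piece is a diagram of a sublink; sublinks of an unlink are unlinks, each piece has at most four crossings, and pieces can be untied separately, then separated by planar isotopy. Also, a crossing of a component with itself that separates the graph (a nugatory crossing) can be undone by a twist of one side; but to avoid non-Reidemeister moves, note that in the graph a nugatory crossing with no further crossings on one side is exactly a kink, so we proceed by examining reduced pieces directly.

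Now classify connected 4-valent planar graphs with $1\le n\le 4$ vertices, with all possible crossing information. By Euler's formula, such a graph has $n+2$ faces, so some face has at most two edges when $n\le 4$: the average face degree is $4n/(n+2)<4$, giving a monogon or bigon, indeed several. A monogon is a kink, removable by RI. If there is no monogon, consider the bigons. If some bigon is \emph{non-alternating} (one strand passes over at both of its vertices), RII applies. The remaining case is that every bigon is alternating. Then I claim the diagram is a reduced alternating diagram, or close to one: with $n\le 4$ and no monogons, the graphs are few (the 2-crossing Hopf shadow, the 3-crossing trefoil shadow, the 4-crossing shadows of the figure-eight, the $(2,4)$ torus link, the connected sum shadows such as two Hopf/trefoil-type pieces, and the 4-vertex chain with a 2-edge cut), and one checks case by case that either some bigon is non-alternating or the diagram is alternating and reduced. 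In the latter case, by Kauffman--Murasugi--Thistlethwaite its crossing number is $n>0$, or more elementarily its linking numbers or determinant are nonzero (Hopf link, trefoil, figure-eight, $(2,4)$ torus link, composites), contradicting that $D$ is an unlink.

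The main obstacle is the finite case analysis of 4-valent planar graphs with at most four vertices, in particular those with a separating pair of edges, where ``alternating at every bigon'' need not mean globally alternating, and diagrams with 2-edge cuts may hide a nugatory-like structure. I would first enumerate these shadows explicitly. For each, I would verify that a crossing assignment with no monogon and no non-alternating bigon forces a nontrivial determinant, using the connected-sum decomposition along 2-edge cuts to reduce to the prime shadows.
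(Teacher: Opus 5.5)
Your overall plan matches the paper's. You reduce to connected loopless shadows and list them for $n\le 4$: the Hopf shadow, the trefoil shadow, and the three four-vertex shadows 4A, 4B, 4C. You remove any non-alternating bigon by an RII-move, and you show that the remaining crossing assignments are nontrivial links.

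\textbf{The gap.} It lies in your central claim that, once every bigon is alternating, the diagram is alternating and reduced, with exceptions only on shadows having a separating pair of edges.

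\emph{Where the claim does hold.} If a bigon is alternating, both of its edges are alternating. So the claim is true whenever every edge lies on a bigon. That covers the Hopf, trefoil and $T(2,4)$ shadows.

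\emph{Where it fails.} The figure-eight shadow (the paper's 4C) has no 2-edge cut. Its four edges joining the two bigons do not lie on any bigon, so their alternation is not forced. Take the alternating figure-eight diagram and change both crossings of one bigon:
\begin{itemize}
\item that bigon stays alternating, as does the other;
\item the diagram becomes non-alternating along the connecting edges;
\item the result is the four-crossing trefoil diagram of Figure~\ref{fig:four_crossing_trefoil_diagram} (rationally, $2-\tfrac12=\tfrac32$).
\end{itemize}
Kauffman--Murasugi--Thistlethwaite says nothing about this diagram. Your reduction to prime shadows does not remove it either, since 4C is already prime.

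\emph{How to fix it.} The case must be checked separately, for example by computing its determinant $3\neq 1$ or by recognizing the trefoil. This is exactly the extra case the paper handles in its treatment of 4C. Your closing fallback, ``verify a nontrivial determinant for each shadow,'' would catch it, but only if you actually compute on 4C rather than appeal to alternation.

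\textbf{Minor slip.} An average face degree $4n/(n+2)<4$ only produces a face of degree at most $3$. To get a monogon or bigon you need $4n/(n+2)<3$, which holds for $n\le 5$.
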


\begin{proof}[Proof]

We will partition $n$-crossing unlink diagrams by their corresponding 4-valent (multi-) graph, i.e., by forgetting crossing information.

Note that loops in a diagram's graph correspond to half-twists eliminable by a single RI-move. Further, disconnected graphs may be viewed as the disjoint union of graphs for diagrams with fewer crossings, and so we will restrict to connected, loopless graphs.

The case of $n \le 2$ is clear, as an RIII-move requires three crossings. We will then begin with 3-crossing diagrams.

Referring to \cite{oeis}, we see that there is exactly one connected, loopless 4-valent graph on three vertices (see Figure \ref{fig:three_vertex_four_regular_graphs}). It is also easy to see that the figure depicts the only embedding of the graph into $S^2$, since the two edges between vertices 1 and 2 form a bigon and vertex 3 can be on either side of it (in $S^2$, the two sides are symmetric). Once a side is chosen for vertex 3, there are no choices for where to put the two edges connecting vertices 2 and 3. Then, it is easy to see that any way the edges connecting 1 and 3 are placed, one obtains a graph equivalent in $S^2$ to the one depicted.

If crossings are added such that the diagram is alternating, we obtain a diagram for a trefoil. Otherwise, two adjacent crossings will share an overstrand, and therefore also an understrand. An RII-move will reduce the diagram to one with a single crossing.

\begin{figure}[h!]
    \centering

\tikzset{every picture/.style={line width=0.75pt}} %set default line width to 0.75pt        

\begin{tikzpicture}[x=0.75pt,y=0.75pt,yscale=-1,xscale=1]
%uncomment if require: \path (0,191); %set diagram left start at 0, and has height of 191

%Flowchart: Connector [id:dp4322414751918904] 
\draw  [fill={rgb, 255:red, 0; green, 0; blue, 0 }  ,fill opacity=1 ] (68,41.33) .. controls (68,40.23) and (68.9,39.33) .. (70,39.33) .. controls (71.1,39.33) and (72,40.23) .. (72,41.33) .. controls (72,42.44) and (71.1,43.33) .. (70,43.33) .. controls (68.9,43.33) and (68,42.44) .. (68,41.33) -- cycle ;
%Curve Lines [id:da9904535914208306] 
\draw    (30,111.33) .. controls (30.17,81.44) and (40.21,51.48) .. (70,41.33) ;
%Flowchart: Connector [id:dp6613491350056229] 
\draw  [fill={rgb, 255:red, 0; green, 0; blue, 0 }  ,fill opacity=1 ] (28,111.33) .. controls (28,110.23) and (28.9,109.33) .. (30,109.33) .. controls (31.1,109.33) and (32,110.23) .. (32,111.33) .. controls (32,112.44) and (31.1,113.33) .. (30,113.33) .. controls (28.9,113.33) and (28,112.44) .. (28,111.33) -- cycle ;
%Curve Lines [id:da08953798060312357] 
\draw    (110,111.33) .. controls (110.14,81.52) and (100.21,51.48) .. (70,41.33) ;
%Flowchart: Connector [id:dp7191458689307756] 
\draw  [fill={rgb, 255:red, 0; green, 0; blue, 0 }  ,fill opacity=1 ] (108,111.33) .. controls (108,110.23) and (108.9,109.33) .. (110,109.33) .. controls (111.1,109.33) and (112,110.23) .. (112,111.33) .. controls (112,112.44) and (111.1,113.33) .. (110,113.33) .. controls (108.9,113.33) and (108,112.44) .. (108,111.33) -- cycle ;
%Curve Lines [id:da9136397821252505] 
\draw    (30,111.33) .. controls (39.79,126.69) and (100.21,126.69) .. (110,111.33) ;
%Straight Lines [id:da43683502444182765] 
\draw    (30,111.33) -- (110,111.33) ;
%Straight Lines [id:da14102834966976163] 
\draw    (30,111.33) -- (70,41.33) ;
%Straight Lines [id:da25369560530423874] 
\draw    (70,41.33) -- (110,111.33) ;

% Text Node
\draw (71,37.93) node [anchor=south] [inner sep=0.75pt]    {$1$};
% Text Node
\draw (30,114.73) node [anchor=north east] [inner sep=0.75pt]    {$2$};
% Text Node
\draw (112,114.73) node [anchor=north west][inner sep=0.75pt]    {$3$};

\end{tikzpicture}
    \caption{A graph representing all connected, loopless three-crossing link diagrams}
    \label{fig:three_vertex_four_regular_graphs}
\end{figure}

The four-crossing case includes three diagrams (Figure \ref{fig:four_vertex_four_regular_graphs}). We will study each of these and see that if it is the graph underlying a four-crossing unlink projection, then that unlink can be untied without RIII moves. 

First, observe that even though a graph may contain multiple nonequivalent embeddings into $S^2$, these particular graphs do not have other nonequivalent embeddings: In the case of 4A, the three edges between vertices 1 and 4 divide $S^2$ into three regions, which are symmetric to each other. Then vertices 2 and 3 must be in one of these regions. Once this region is chosen, there are no choices for where to place the three edges between vertices 2 and 3, how to connect vertices 1 and 2 to each other and vertices 3 and 4 to each other. The cases of 4B and 4C are similar, except that there are two edges between vertices 1 and 4 that divide the sphere into two regions, one of which contains vertices 2 and 3. A quick check of the cases shows that there are no nonequivalent embeddings.

\begin{figure}
    \centering

\tikzset{every picture/.style={line width=0.75pt}} %set default line width to 0.75pt        

\begin{tikzpicture}[x=0.75pt,y=0.75pt,yscale=-1,xscale=1]
%uncomment if require: \path (0,213); %set diagram left start at 0, and has height of 213

%Flowchart: Connector [id:dp8483638101559281] 
\draw  [fill={rgb, 255:red, 0; green, 0; blue, 0 }  ,fill opacity=1 ] (488.27,43.4) .. controls (488.27,42.3) and (489.16,41.4) .. (490.27,41.4) .. controls (491.37,41.4) and (492.27,42.3) .. (492.27,43.4) .. controls (492.27,44.5) and (491.37,45.4) .. (490.27,45.4) .. controls (489.16,45.4) and (488.27,44.5) .. (488.27,43.4) -- cycle ;
%Flowchart: Connector [id:dp17574295757869418] 
\draw  [fill={rgb, 255:red, 0; green, 0; blue, 0 }  ,fill opacity=1 ] (428.27,143.4) .. controls (428.27,142.3) and (429.16,141.4) .. (430.27,141.4) .. controls (431.37,141.4) and (432.27,142.3) .. (432.27,143.4) .. controls (432.27,144.5) and (431.37,145.4) .. (430.27,145.4) .. controls (429.16,145.4) and (428.27,144.5) .. (428.27,143.4) -- cycle ;
%Flowchart: Connector [id:dp6889317826291567] 
\draw  [fill={rgb, 255:red, 0; green, 0; blue, 0 }  ,fill opacity=1 ] (548.27,143.4) .. controls (548.27,142.3) and (549.16,141.4) .. (550.27,141.4) .. controls (551.37,141.4) and (552.27,142.3) .. (552.27,143.4) .. controls (552.27,144.5) and (551.37,145.4) .. (550.27,145.4) .. controls (549.16,145.4) and (548.27,144.5) .. (548.27,143.4) -- cycle ;
%Curve Lines [id:da3810499779534984] 
\draw    (430.27,143.4) .. controls (440.05,158.76) and (540.48,158.76) .. (550.27,143.4) ;
%Straight Lines [id:da43653651981932085] 
\draw    (430.27,143.4) -- (550.27,143.4) ;
%Straight Lines [id:da8321852513012561] 
\draw    (430.27,143.4) -- (490.27,43.4) ;
%Straight Lines [id:da7980552271615796] 
\draw    (490.27,43.4) -- (550.27,143.4) ;
%Flowchart: Connector [id:dp6776888608574723] 
\draw  [fill={rgb, 255:red, 0; green, 0; blue, 0 }  ,fill opacity=1 ] (488.27,103.4) .. controls (488.27,102.3) and (489.16,101.4) .. (490.27,101.4) .. controls (491.37,101.4) and (492.27,102.3) .. (492.27,103.4) .. controls (492.27,104.5) and (491.37,105.4) .. (490.27,105.4) .. controls (489.16,105.4) and (488.27,104.5) .. (488.27,103.4) -- cycle ;
%Straight Lines [id:da7042374818849377] 
\draw    (430.27,143.4) -- (490.27,103.4) ;
%Straight Lines [id:da974860647506673] 
\draw    (490.27,103.4) -- (550.27,143.4) ;
%Curve Lines [id:da8788737090654756] 
\draw    (490.27,103.4) .. controls (480.27,73.9) and (480.27,74.4) .. (490.27,43.4) ;
%Curve Lines [id:da6428469813654594] 
\draw    (490.27,103.4) .. controls (500.27,73.4) and (500.77,73.9) .. (490.27,43.4) ;
%Shape: Rectangle [id:dp3526633922945749] 
\draw   (260,40) -- (360,40) -- (360,140) -- (260,140) -- cycle ;
%Curve Lines [id:da9927264811576506] 
\draw    (260,140) .. controls (270.5,160) and (350.5,160) .. (360,140) ;
%Curve Lines [id:da40599453337017855] 
\draw    (260,40) .. controls (270.5,20) and (350.5,20) .. (360,40) ;
%Curve Lines [id:da11903045376161636] 
\draw    (260,40) .. controls (240.5,50) and (240.5,130) .. (260,140) ;
%Curve Lines [id:da9727862111778248] 
\draw    (360,40) .. controls (380.5,50) and (380.5,130) .. (360,140) ;
%Flowchart: Connector [id:dp9450111377791046] 
\draw  [fill={rgb, 255:red, 0; green, 0; blue, 0 }  ,fill opacity=1 ] (258,140) .. controls (258,138.9) and (258.9,138) .. (260,138) .. controls (261.1,138) and (262,138.9) .. (262,140) .. controls (262,141.1) and (261.1,142) .. (260,142) .. controls (258.9,142) and (258,141.1) .. (258,140) -- cycle ;
%Flowchart: Connector [id:dp39266954511112473] 
\draw  [fill={rgb, 255:red, 0; green, 0; blue, 0 }  ,fill opacity=1 ] (358,140) .. controls (358,138.9) and (358.9,138) .. (360,138) .. controls (361.1,138) and (362,138.9) .. (362,140) .. controls (362,141.1) and (361.1,142) .. (360,142) .. controls (358.9,142) and (358,141.1) .. (358,140) -- cycle ;
%Flowchart: Connector [id:dp22305762446119726] 
\draw  [fill={rgb, 255:red, 0; green, 0; blue, 0 }  ,fill opacity=1 ] (258,40) .. controls (258,38.9) and (258.9,38) .. (260,38) .. controls (261.1,38) and (262,38.9) .. (262,40) .. controls (262,41.1) and (261.1,42) .. (260,42) .. controls (258.9,42) and (258,41.1) .. (258,40) -- cycle ;
%Flowchart: Connector [id:dp9203982379816675] 
\draw  [fill={rgb, 255:red, 0; green, 0; blue, 0 }  ,fill opacity=1 ] (358,40) .. controls (358,38.9) and (358.9,38) .. (360,38) .. controls (361.1,38) and (362,38.9) .. (362,40) .. controls (362,41.1) and (361.1,42) .. (360,42) .. controls (358.9,42) and (358,41.1) .. (358,40) -- cycle ;
%Shape: Rectangle [id:dp5805066928509983] 
\draw   (70.29,40.5) -- (170.29,40.5) -- (170.29,140.5) -- (70.29,140.5) -- cycle ;
%Curve Lines [id:da6027342377945427] 
\draw    (70.29,140.5) .. controls (80.79,160.5) and (160.79,160.5) .. (170.29,140.5) ;
%Curve Lines [id:da261636292638766] 
\draw    (70.29,40.5) .. controls (80.79,20.5) and (160.79,20.5) .. (170.29,40.5) ;
%Flowchart: Connector [id:dp7943392351871669] 
\draw  [fill={rgb, 255:red, 0; green, 0; blue, 0 }  ,fill opacity=1 ] (68.29,140.5) .. controls (68.29,139.4) and (69.19,138.5) .. (70.29,138.5) .. controls (71.4,138.5) and (72.29,139.4) .. (72.29,140.5) .. controls (72.29,141.6) and (71.4,142.5) .. (70.29,142.5) .. controls (69.19,142.5) and (68.29,141.6) .. (68.29,140.5) -- cycle ;
%Flowchart: Connector [id:dp4605517628571669] 
\draw  [fill={rgb, 255:red, 0; green, 0; blue, 0 }  ,fill opacity=1 ] (168.29,140.5) .. controls (168.29,139.4) and (169.19,138.5) .. (170.29,138.5) .. controls (171.4,138.5) and (172.29,139.4) .. (172.29,140.5) .. controls (172.29,141.6) and (171.4,142.5) .. (170.29,142.5) .. controls (169.19,142.5) and (168.29,141.6) .. (168.29,140.5) -- cycle ;
%Flowchart: Connector [id:dp986243517496883] 
\draw  [fill={rgb, 255:red, 0; green, 0; blue, 0 }  ,fill opacity=1 ] (68.29,40.5) .. controls (68.29,39.4) and (69.19,38.5) .. (70.29,38.5) .. controls (71.4,38.5) and (72.29,39.4) .. (72.29,40.5) .. controls (72.29,41.6) and (71.4,42.5) .. (70.29,42.5) .. controls (69.19,42.5) and (68.29,41.6) .. (68.29,40.5) -- cycle ;
%Flowchart: Connector [id:dp18042524516352898] 
\draw  [fill={rgb, 255:red, 0; green, 0; blue, 0 }  ,fill opacity=1 ] (168.29,40.5) .. controls (168.29,39.4) and (169.19,38.5) .. (170.29,38.5) .. controls (171.4,38.5) and (172.29,39.4) .. (172.29,40.5) .. controls (172.29,41.6) and (171.4,42.5) .. (170.29,42.5) .. controls (169.19,42.5) and (168.29,41.6) .. (168.29,40.5) -- cycle ;
%Curve Lines [id:da5838675554208134] 
\draw    (70.29,40.5) .. controls (80.79,60.5) and (160.79,60.5) .. (170.29,40.5) ;
%Curve Lines [id:da45456640777262136] 
\draw    (70.29,140.5) .. controls (80.79,120.5) and (160.79,120.5) .. (170.29,140.5) ;

% Text Node
\draw (110.67,161.67) node [anchor=north west][inner sep=0.75pt]   [align=left] {4A};
% Text Node
\draw (300.67,161.7) node [anchor=north west][inner sep=0.75pt]   [align=left] {4B};
% Text Node
\draw (480.67,161.7) node [anchor=north west][inner sep=0.75pt]   [align=left] {4C};
% Text Node
\draw (66,40.5) node [anchor=east] [inner sep=0.75pt]    {$1$};
% Text Node
\draw (66,140.5) node [anchor=east] [inner sep=0.75pt]    {$2$};
% Text Node
\draw (174.29,140.5) node [anchor=west] [inner sep=0.75pt]    {$3$};
% Text Node
\draw (174.29,40.5) node [anchor=west] [inner sep=0.75pt]    {$4$};
% Text Node
\draw (258,38.6) node [anchor=south east] [inner sep=0.75pt]    {$1$};
% Text Node
\draw (258,141.4) node [anchor=north east] [inner sep=0.75pt]    {$2$};
% Text Node
\draw (362,141.4) node [anchor=north west][inner sep=0.75pt]    {$3$};
% Text Node
\draw (362,38.6) node [anchor=south west] [inner sep=0.75pt]    {$4$};
% Text Node
\draw (490.27,40) node [anchor=south] [inner sep=0.75pt]    {$1$};
% Text Node
\draw (428.27,143.4) node [anchor=east] [inner sep=0.75pt]    {$2$};
% Text Node
\draw (554.27,143.4) node [anchor=west] [inner sep=0.75pt]    {$3$};
% Text Node
\draw (490.27,108.8) node [anchor=north] [inner sep=0.75pt]    {$4$};

\end{tikzpicture}
    \caption{Graphs representing all connected, loopless four-crossing link diagrams}
    \label{fig:four_vertex_four_regular_graphs}
\end{figure}

Notice first that any instance of a tripled edge (Figure \ref{fig:tripled_edge}), as in diagram 4A, will contain a component $M$ of the link after adding crossings. If the vertices sharing the tripled edge are resolved in the same way (that is, so that the overstrand of one becomes the understrand of the other), then $M$ is a meridian of the other component present in the three-strand loop, and therefore the link is not split. Thus, the two vertices must be resolved to share an overstrand, and an RII-move will eliminate both, reducing to a two-crossing diagram.

\begin{figure}[h!]
    \centering

\tikzset{every picture/.style={line width=0.75pt}} %set default line width to 0.75pt        

\begin{tikzpicture}[x=0.75pt,y=0.75pt,yscale=-1,xscale=1]
%uncomment if require: \path (0,91); %set diagram left start at 0, and has height of 91

%Curve Lines [id:da5913366437108214] 
\draw    (50,50) .. controls (60.5,30) and (140.5,30) .. (150,50) ;
%Flowchart: Connector [id:dp43699279633348265] 
\draw  [fill={rgb, 255:red, 0; green, 0; blue, 0 }  ,fill opacity=1 ] (48,50) .. controls (48,48.9) and (48.9,48) .. (50,48) .. controls (51.1,48) and (52,48.9) .. (52,50) .. controls (52,51.1) and (51.1,52) .. (50,52) .. controls (48.9,52) and (48,51.1) .. (48,50) -- cycle ;
%Flowchart: Connector [id:dp7468976115556447] 
\draw  [fill={rgb, 255:red, 0; green, 0; blue, 0 }  ,fill opacity=1 ] (148,50) .. controls (148,48.9) and (148.9,48) .. (150,48) .. controls (151.1,48) and (152,48.9) .. (152,50) .. controls (152,51.1) and (151.1,52) .. (150,52) .. controls (148.9,52) and (148,51.1) .. (148,50) -- cycle ;
%Curve Lines [id:da019573311279188155] 
\draw    (50,50) .. controls (60.5,70) and (140.5,70) .. (150,50) ;
%Straight Lines [id:da9560846211512525] 
\draw    (50,50) -- (150,50) ;
%Straight Lines [id:da32992096279974836] 
\draw    (150,50) -- (160,60) ;
%Straight Lines [id:da5946073023555385] 
\draw  [dash pattern={on 0.84pt off 2.51pt}]  (160,60) -- (170,70) ;
%Straight Lines [id:da19047838862263067] 
\draw    (50,50) -- (40,60) ;
%Straight Lines [id:da921660302501642] 
\draw  [dash pattern={on 0.84pt off 2.51pt}]  (40,60) -- (30,70) ;

\end{tikzpicture}
    \caption{A tripled edge, corresponding a meridian or split component}
    \label{fig:tripled_edge}
\end{figure}

Consider diagram 4B. If crossings are added such that the diagram is alternating, it is a non-trivial link. Thus, an unlink represented by this diagram will have two adjacent crossings sharing either an under- or overstrand (suppose an overstrand). If any two adjacent crossings share an overstrand, perform an RII-move and reduce to a two-crossing diagram. 

For diagram 4C, note that if the diagram is alternating, we obtain the figure eight knot. If crossings 1 and 4 or 2 and 3 share an overstrand, we can reduce via an RII-move. Suppose that neither the upper nor lower pair is eliminable with an RII-move. Then an overstrand is shared between one crossing above and one below. This results in the diagram in Figure \ref{fig:four_crossing_trefoil_diagram} (possibly after a reflection and/or isotopy), which is a diagram of the trefoil. The result now follows.
    
\end{proof}

\begin{figure}[h!]
    \centering

\tikzset{every picture/.style={line width=0.75pt}} %set default line width to 0.75pt        

\begin{tikzpicture}[x=0.75pt,y=0.75pt,yscale=-1,xscale=1]
%uncomment if require: \path (0,198); %set diagram left start at 0, and has height of 198

%Curve Lines [id:da5540468481183012] 
\draw [line width=1.5]    (60.5,96.58) .. controls (56.13,114.07) and (135.83,113.42) .. (129.83,96.42) ;
%Curve Lines [id:da5196780984678055] 
\draw [line width=1.5]    (90,70) .. controls (100.44,65.44) and (108.42,47.59) .. (98.2,37.7) ;
%Curve Lines [id:da9852689038721693] 
\draw [line width=1.5]    (99.23,68.07) .. controls (105.23,75.47) and (120.83,78.42) .. (128.17,89.92) ;
%Curve Lines [id:da6375264813241613] 
\draw [line width=1.5]    (93.65,62.73) .. controls (89.8,58.93) and (87.91,55) .. (87.91,49.18) .. controls (87.91,43.36) and (93.5,38.41) .. (100.25,30.43) .. controls (107,22.45) and (126.03,35.21) .. (130.25,40.43) .. controls (134.47,45.65) and (140.45,53.13) .. (140.45,65.13) .. controls (140.45,77.13) and (134.3,88.52) .. (128.5,92.92) .. controls (122.7,97.32) and (103,101.31) .. (95.25,101.33) .. controls (87.5,101.36) and (69.67,98.24) .. (65.45,94.13) ;
%Curve Lines [id:da22170562731833043] 
\draw [line width=1.5]    (60,90) .. controls (55.78,85) and (49.2,74.5) .. (49.2,64.5) .. controls (49.2,54.5) and (54.67,45) .. (60,40) .. controls (65.33,35) and (82.2,18.1) .. (92.4,31.5) ;
%Curve Lines [id:da7694444091129996] 
\draw [line width=1.5]    (60.5,96.58) .. controls (66.83,80.92) and (80.22,74.56) .. (90,70) ;

\end{tikzpicture}
    \caption{A diagram of the trefoil}
    \label{fig:four_crossing_trefoil_diagram}
\end{figure}

\begin{lemma}

Any unlink diagram with five crossings can be untied with at most one RIII-move.
\label{lemma:fivecross}
\end{lemma}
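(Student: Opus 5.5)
We will follow the same strategy as in Lemma \ref{lemma:four_crossing_one_RIII}: partition five-crossing unlink diagrams by their underlying 4-valent multigraph on five vertices, forgetting crossing information. As before, a loop in the graph gives an RI-move that reduces to a four-crossing diagram. A disconnected graph splits into diagrams with fewer crossings. In both cases Lemma \ref{lemma:four_crossing_one_RIII} applies and no RIII-move is needed. So we may restrict to connected, loopless 4-valent multigraphs on five vertices, which we enumerate using \cite{oeis}. For each graph we also enumerate its embeddings in $S^2$, as was done for 4A, 4B and 4C.

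The key reduction is the following observation. If two crossings joined by an edge (or a bigon) share an overstrand, an RII-move removes both and leaves a three-crossing diagram. By the previous lemma, that diagram untangles with no RIII-moves. A single RI-move likewise drops us to four crossings. Also, a tripled edge forces its two endpoints to share an overstrand, by the meridian argument of Figure \ref{fig:tripled_edge}. So graphs with a tripled edge reduce immediately. Hence the only diagrams that might need an RIII-move are those in which no bigon face can be cancelled by RII. For each remaining graph embedding we first check the alternating resolution. This is reduced by the Tait/Kauffman–Murasugi argument, so it is a nontrivial knot or link when the embedding is reduced; the relevant examples are the $5_1$ and $5_2$ knots and two-component links such as the Whitehead link. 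We then record which partial resolutions avoid every RII-cancellable bigon.

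For each such \emph{stuck} resolution, we show one of two things. Either it is a nontrivial link, because it is a diagram of the trefoil, figure-eight, Hopf link or similar, detected by linking number or by recognizing the diagram as in Figure \ref{fig:four_crossing_trefoil_diagram}. Or it contains a triangular face whose three crossings are resolved compatibly for an RIII-move. In the second case, performing that RIII-move creates a bigon whose strands share an overstrand. An RII-move then lowers the crossing number to three, and the previous lemma finishes with no further RIII-moves. The total is then at most one RIII-move.

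The main obstacle is the case analysis itself: the number of five-vertex 4-regular multigraphs and their planar embeddings is noticeably larger than in the four-crossing case. We plan to organize it by the multiplicity structure of the edges (doubled edges forming bigons versus simple graphs, where the only simple one is $K_5$, which is nonplanar and therefore excluded). Non-split and nontrivial configurations would be ruled out by linking numbers and by identification with known small knots.
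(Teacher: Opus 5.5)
Your outline matches the paper's: classify by the underlying 4-valent graph, cancel by RII any bigon whose two crossings share an overstrand, use the meridian argument for a tripled edge, and note that alternating resolutions are nontrivial. Your appeal to Tait/Kauffman--Murasugi for reduced alternating diagrams is a cleaner justification of that last point than the paper gives.

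\textbf{The gap.} The proposal stops where the proof begins. This lemma is nothing but the case check, and you replace it with the claim that every ``stuck'' resolution is either nontrivial, or has an RIII-admissible triangle whose move creates an RII-cancellable bigon. Neither half is justified in general. Nothing forces a stuck unlink diagram to contain a non-cyclic (RIII-admissible) triangle. Moreover, an RIII-move creates a bigon only from a triangle that shares an edge with the moved one, and nothing controls whether the new bigon's crossings share an overstrand. These are properties of the specific five-crossing diagrams, and the only way to establish them is to list those diagrams. A minor point: two crossings that share an overstrand along a single edge do not admit RII; you need a bigon face.

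\textbf{What the enumeration gives.} Besides the nonplanar $K_5$, there are five connected loopless graphs, each with essentially unique planar embeddings:
\begin{itemize}
\item 5A and 5B, which contain a tripled edge;
\item 5C, the cycle of five bigons (the $5_1$ projection);
\item 5E, the $5_2$ projection;
\item 5D, the Whitehead-link projection (two clasps and a central crossing).
\end{itemize}
In 5C a stuck resolution is alternating. In 5E a stuck resolution is either $5_2$ or the non-alternating figure-eight diagram, in which the twist region and clasp have opposite types. Linking number cannot detect the figure-eight case, so you must identify that diagram directly.

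The only stuck unlink lives on 5D. There both clasps are twisted, and the central crossing is chosen so that one strand runs over the central crossing and over one crossing of each clasp. The other choices of central crossing give the Whitehead link or the $(2,4)$ torus link. In the unlink case all four triangles at the central crossing are RIII-admissible. The paper moves a triangle that borders a clasp; this turns that clasp's bigon into a kink, which an RI-move removes, leaving four crossings. Your RIII-then-RII route also works here, because each such move creates two bigons with a common overstrand, and RII drops the diagram to three crossings.

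So once this enumeration is supplied, your argument closes. As written, its key step is asserted rather than proved.
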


\begin{proof}[Proof]

From \cite{oeis}, we know there are exactly six loopless 4-valent graphs on five vertices. One of these is $K_5$, the complete graph on five vertices, which is not planar, and therefore has no knot diagram representatives. The other five graphs appear in Figure \ref{fig:five_crossing_four_regular_graphs}.

Similarly to in the proof of the previous lemma, let us first check that for graphs 5C, 5D, and 5E, there are no nonequivalent planar embeddings for these graphs. (For 5A and 5B, we do not actually need to check this, because it is easy to see that any planar embedding needs to have the tripled edge, which, if the graph underlies an unlink, can always be eliminated by an RII move that pulls out a component.)

For the other graphs, note that each doubled edge forms a bigon that splits $S^2$ into two regions, and all the other vertices must be in the same region. Thus, if we fill in the unoccupied region, we may think of the bigons as operating as a single thick edge. From here a case-by-case shows that there are no other embeddings: for 5C, the graph consists of a single cycle of 5 vertices and 5 thick edges. For 5D and 5E, note that vertices 1, 2, 3, 4 form a cycle (of regular and thick edges) that divides the $S^2$ into two symmetric regions. In one of them, we place vertex 5,  and then there is only one way to connect it to the other vertices.

\begin{figure}[h!]
    \centering

\tikzset{every picture/.style={line width=0.75pt}} %set default line width to 0.75pt        

\begin{tikzpicture}[x=0.75pt,y=0.75pt,yscale=-1,xscale=1]
%uncomment if require: \path (0,306); %set diagram left start at 0, and has height of 306

%Shape: Circle [id:dp9640238986732845] 
\draw  [fill={rgb, 255:red, 0; green, 0; blue, 0 }  ,fill opacity=1 ] (408,110) .. controls (408,108.9) and (408.9,108) .. (410,108) .. controls (411.1,108) and (412,108.9) .. (412,110) .. controls (412,111.1) and (411.1,112) .. (410,112) .. controls (408.9,112) and (408,111.1) .. (408,110) -- cycle ;
%Straight Lines [id:da8787713366791375] 
\draw    (410,110) -- (470,110) ;
%Shape: Circle [id:dp9445874225407801] 
\draw  [fill={rgb, 255:red, 0; green, 0; blue, 0 }  ,fill opacity=1 ] (468,110) .. controls (468,108.9) and (468.9,108) .. (470,108) .. controls (471.1,108) and (472,108.9) .. (472,110) .. controls (472,111.1) and (471.1,112) .. (470,112) .. controls (468.9,112) and (468,111.1) .. (468,110) -- cycle ;
%Straight Lines [id:da11678964417533266] 
\draw    (390,60) -- (440,30) ;
%Straight Lines [id:da8689321810127201] 
\draw    (490,60) -- (440,30) ;
%Straight Lines [id:da4072609390301055] 
\draw    (410,110) -- (390,60) ;
%Straight Lines [id:da6328465718585151] 
\draw    (470,110) -- (490,60) ;
%Shape: Circle [id:dp8065254497434383] 
\draw  [fill={rgb, 255:red, 0; green, 0; blue, 0 }  ,fill opacity=1 ] (488,60) .. controls (488,58.9) and (488.9,58) .. (490,58) .. controls (491.1,58) and (492,58.9) .. (492,60) .. controls (492,61.1) and (491.1,62) .. (490,62) .. controls (488.9,62) and (488,61.1) .. (488,60) -- cycle ;
%Shape: Circle [id:dp7814214329344622] 
\draw  [fill={rgb, 255:red, 0; green, 0; blue, 0 }  ,fill opacity=1 ] (388,60) .. controls (388,58.9) and (388.9,58) .. (390,58) .. controls (391.1,58) and (392,58.9) .. (392,60) .. controls (392,61.1) and (391.1,62) .. (390,62) .. controls (388.9,62) and (388,61.1) .. (388,60) -- cycle ;
%Shape: Circle [id:dp22494806359769715] 
\draw  [fill={rgb, 255:red, 0; green, 0; blue, 0 }  ,fill opacity=1 ] (438,30) .. controls (438,28.9) and (438.9,28) .. (440,28) .. controls (441.1,28) and (442,28.9) .. (442,30) .. controls (442,31.1) and (441.1,32) .. (440,32) .. controls (438.9,32) and (438,31.1) .. (438,30) -- cycle ;
%Curve Lines [id:da9082396957362973] 
\draw    (410,110) .. controls (420.33,125.5) and (460.33,125.5) .. (470,110) ;
%Curve Lines [id:da7699575922227414] 
\draw    (470,110) .. controls (485.67,110.17) and (500.33,70.17) .. (490,60) ;
%Curve Lines [id:da011795636642213503] 
\draw    (410,110) .. controls (395,109.83) and (380.33,70.17) .. (390,60) ;
%Curve Lines [id:da2543037777284507] 
\draw    (390,60) .. controls (385.33,49.83) and (430,20.5) .. (440,30) ;
%Curve Lines [id:da07474124447725572] 
\draw    (490,60) .. controls (495.33,49.83) and (450.33,20.17) .. (440,30) ;
%Shape: Circle [id:dp11502103979164213] 
\draw  [fill={rgb, 255:red, 0; green, 0; blue, 0 }  ,fill opacity=1 ] (237.77,110) .. controls (237.77,108.9) and (238.67,108) .. (239.77,108) .. controls (240.87,108) and (241.77,108.9) .. (241.77,110) .. controls (241.77,111.1) and (240.87,112) .. (239.77,112) .. controls (238.67,112) and (237.77,111.1) .. (237.77,110) -- cycle ;
%Straight Lines [id:da7967998054950515] 
\draw    (239.77,110) -- (299.77,110) ;
%Shape: Circle [id:dp3313325058466662] 
\draw  [fill={rgb, 255:red, 0; green, 0; blue, 0 }  ,fill opacity=1 ] (297.77,110) .. controls (297.77,108.9) and (298.67,108) .. (299.77,108) .. controls (300.87,108) and (301.77,108.9) .. (301.77,110) .. controls (301.77,111.1) and (300.87,112) .. (299.77,112) .. controls (298.67,112) and (297.77,111.1) .. (297.77,110) -- cycle ;
%Straight Lines [id:da5552997668297112] 
\draw    (219.77,60) -- (269.77,30) ;
%Straight Lines [id:da892030864325264] 
\draw    (319.77,60) -- (269.77,30) ;
%Straight Lines [id:da015311647961891217] 
\draw    (239.77,110) -- (219.77,60) ;
%Straight Lines [id:da3377999895258773] 
\draw    (299.77,110) -- (319.77,60) ;
%Shape: Circle [id:dp42143824624181336] 
\draw  [fill={rgb, 255:red, 0; green, 0; blue, 0 }  ,fill opacity=1 ] (317.77,60) .. controls (317.77,58.9) and (318.67,58) .. (319.77,58) .. controls (320.87,58) and (321.77,58.9) .. (321.77,60) .. controls (321.77,61.1) and (320.87,62) .. (319.77,62) .. controls (318.67,62) and (317.77,61.1) .. (317.77,60) -- cycle ;
%Shape: Circle [id:dp48025825965172875] 
\draw  [fill={rgb, 255:red, 0; green, 0; blue, 0 }  ,fill opacity=1 ] (217.77,60) .. controls (217.77,58.9) and (218.67,58) .. (219.77,58) .. controls (220.87,58) and (221.77,58.9) .. (221.77,60) .. controls (221.77,61.1) and (220.87,62) .. (219.77,62) .. controls (218.67,62) and (217.77,61.1) .. (217.77,60) -- cycle ;
%Shape: Circle [id:dp7682183784066424] 
\draw  [fill={rgb, 255:red, 0; green, 0; blue, 0 }  ,fill opacity=1 ] (267.77,30) .. controls (267.77,28.9) and (268.67,28) .. (269.77,28) .. controls (270.87,28) and (271.77,28.9) .. (271.77,30) .. controls (271.77,31.1) and (270.87,32) .. (269.77,32) .. controls (268.67,32) and (267.77,31.1) .. (267.77,30) -- cycle ;
%Curve Lines [id:da5791932540553008] 
\draw    (239.77,110) .. controls (250.1,125.5) and (290.1,125.5) .. (299.77,110) ;
%Curve Lines [id:da5573305794979888] 
\draw    (219.77,60) .. controls (215.1,49.83) and (259.77,20.5) .. (269.77,30) ;
%Curve Lines [id:da009419725989958327] 
\draw    (319.77,60) .. controls (325.1,49.83) and (280.1,20.17) .. (269.77,30) ;
%Curve Lines [id:da10516020581296037] 
\draw    (239.77,110) .. controls (250,95.17) and (290.33,94.83) .. (299.77,110) ;
%Straight Lines [id:da27713360057532077] 
\draw    (219.77,60) -- (319.77,60) ;
%Straight Lines [id:da6300627648663941] 
\draw    (149.6,240.4) -- (149.6,160.4) ;
%Straight Lines [id:da4879120943339804] 
\draw    (229.6,240.4) -- (229.6,160.4) ;
%Straight Lines [id:da7279857052025339] 
\draw    (149.6,240.4) -- (229.6,240.4) ;
%Straight Lines [id:da4740710426514254] 
\draw    (149.6,160.4) -- (229.6,160.4) ;
%Straight Lines [id:da9858437500383119] 
\draw    (149.6,240.4) -- (189.6,200.4) ;
%Straight Lines [id:da015936888816523553] 
\draw    (229.6,240.4) -- (189.6,200.4) ;
%Straight Lines [id:da859985726504046] 
\draw    (189.6,200.4) -- (229.6,160.4) ;
%Straight Lines [id:da7413484667097071] 
\draw    (189.6,200.4) -- (149.6,160.4) ;
%Curve Lines [id:da7875355091703427] 
\draw    (149.6,240.4) .. controls (159.93,255.9) and (219.93,255.9) .. (229.6,240.4) ;
%Curve Lines [id:da20540921274346002] 
\draw    (149.6,160.4) .. controls (159.83,145.57) and (220.16,145.23) .. (229.6,160.4) ;
%Shape: Circle [id:dp9489326019566212] 
\draw  [fill={rgb, 255:red, 0; green, 0; blue, 0 }  ,fill opacity=1 ] (147.6,160.4) .. controls (147.6,159.3) and (148.5,158.4) .. (149.6,158.4) .. controls (150.7,158.4) and (151.6,159.3) .. (151.6,160.4) .. controls (151.6,161.5) and (150.7,162.4) .. (149.6,162.4) .. controls (148.5,162.4) and (147.6,161.5) .. (147.6,160.4) -- cycle ;
%Shape: Circle [id:dp42868387792361307] 
\draw  [fill={rgb, 255:red, 0; green, 0; blue, 0 }  ,fill opacity=1 ] (187.6,200.4) .. controls (187.6,199.3) and (188.5,198.4) .. (189.6,198.4) .. controls (190.7,198.4) and (191.6,199.3) .. (191.6,200.4) .. controls (191.6,201.5) and (190.7,202.4) .. (189.6,202.4) .. controls (188.5,202.4) and (187.6,201.5) .. (187.6,200.4) -- cycle ;
%Shape: Circle [id:dp26710052973554965] 
\draw  [fill={rgb, 255:red, 0; green, 0; blue, 0 }  ,fill opacity=1 ] (227.6,160.4) .. controls (227.6,159.3) and (228.5,158.4) .. (229.6,158.4) .. controls (230.7,158.4) and (231.6,159.3) .. (231.6,160.4) .. controls (231.6,161.5) and (230.7,162.4) .. (229.6,162.4) .. controls (228.5,162.4) and (227.6,161.5) .. (227.6,160.4) -- cycle ;
%Shape: Circle [id:dp6015446782359338] 
\draw  [fill={rgb, 255:red, 0; green, 0; blue, 0 }  ,fill opacity=1 ] (147.6,240.4) .. controls (147.6,239.3) and (148.5,238.4) .. (149.6,238.4) .. controls (150.7,238.4) and (151.6,239.3) .. (151.6,240.4) .. controls (151.6,241.5) and (150.7,242.4) .. (149.6,242.4) .. controls (148.5,242.4) and (147.6,241.5) .. (147.6,240.4) -- cycle ;
%Shape: Circle [id:dp5904223998753975] 
\draw  [fill={rgb, 255:red, 0; green, 0; blue, 0 }  ,fill opacity=1 ] (227.6,240.4) .. controls (227.6,239.3) and (228.5,238.4) .. (229.6,238.4) .. controls (230.7,238.4) and (231.6,239.3) .. (231.6,240.4) .. controls (231.6,241.5) and (230.7,242.4) .. (229.6,242.4) .. controls (228.5,242.4) and (227.6,241.5) .. (227.6,240.4) -- cycle ;
%Straight Lines [id:da9027022169235569] 
\draw    (65.2,110.25) -- (145.2,110.25) ;
%Straight Lines [id:da5958148340393429] 
\draw    (65.2,30.25) -- (145.2,30.25) ;
%Straight Lines [id:da3932354577028141] 
\draw    (65.2,110.25) -- (105.2,70.25) ;
%Straight Lines [id:da8796814815704448] 
\draw    (145.2,110.25) -- (105.2,70.25) ;
%Straight Lines [id:da9040612442933468] 
\draw    (105.2,70.25) -- (145.2,30.25) ;
%Straight Lines [id:da9441459141413022] 
\draw    (105.2,70.25) -- (65.2,30.25) ;
%Curve Lines [id:da44691448131188405] 
\draw    (65.2,110.25) .. controls (75.53,125.75) and (135.53,125.75) .. (145.2,110.25) ;
%Curve Lines [id:da5353835871607142] 
\draw    (65.2,30.25) .. controls (75.43,15.42) and (135.76,15.08) .. (145.2,30.25) ;
%Shape: Circle [id:dp700073871933326] 
\draw  [fill={rgb, 255:red, 0; green, 0; blue, 0 }  ,fill opacity=1 ] (63.2,30.25) .. controls (63.2,29.15) and (64.1,28.25) .. (65.2,28.25) .. controls (66.3,28.25) and (67.2,29.15) .. (67.2,30.25) .. controls (67.2,31.35) and (66.3,32.25) .. (65.2,32.25) .. controls (64.1,32.25) and (63.2,31.35) .. (63.2,30.25) -- cycle ;
%Shape: Circle [id:dp5695629920666142] 
\draw  [fill={rgb, 255:red, 0; green, 0; blue, 0 }  ,fill opacity=1 ] (103.2,70.25) .. controls (103.2,69.15) and (104.1,68.25) .. (105.2,68.25) .. controls (106.3,68.25) and (107.2,69.15) .. (107.2,70.25) .. controls (107.2,71.35) and (106.3,72.25) .. (105.2,72.25) .. controls (104.1,72.25) and (103.2,71.35) .. (103.2,70.25) -- cycle ;
%Shape: Circle [id:dp9369162084463412] 
\draw  [fill={rgb, 255:red, 0; green, 0; blue, 0 }  ,fill opacity=1 ] (143.2,30.25) .. controls (143.2,29.15) and (144.1,28.25) .. (145.2,28.25) .. controls (146.3,28.25) and (147.2,29.15) .. (147.2,30.25) .. controls (147.2,31.35) and (146.3,32.25) .. (145.2,32.25) .. controls (144.1,32.25) and (143.2,31.35) .. (143.2,30.25) -- cycle ;
%Shape: Circle [id:dp0928876892723961] 
\draw  [fill={rgb, 255:red, 0; green, 0; blue, 0 }  ,fill opacity=1 ] (63.2,110.25) .. controls (63.2,109.15) and (64.1,108.25) .. (65.2,108.25) .. controls (66.3,108.25) and (67.2,109.15) .. (67.2,110.25) .. controls (67.2,111.35) and (66.3,112.25) .. (65.2,112.25) .. controls (64.1,112.25) and (63.2,111.35) .. (63.2,110.25) -- cycle ;
%Shape: Circle [id:dp9086815195658415] 
\draw  [fill={rgb, 255:red, 0; green, 0; blue, 0 }  ,fill opacity=1 ] (143.2,110.25) .. controls (143.2,109.15) and (144.1,108.25) .. (145.2,108.25) .. controls (146.3,108.25) and (147.2,109.15) .. (147.2,110.25) .. controls (147.2,111.35) and (146.3,112.25) .. (145.2,112.25) .. controls (144.1,112.25) and (143.2,111.35) .. (143.2,110.25) -- cycle ;
%Curve Lines [id:da7280349599660244] 
\draw    (65.2,110.25) .. controls (75.43,95.42) and (135.76,95.08) .. (145.2,110.25) ;
%Curve Lines [id:da3997018376408177] 
\draw    (65.2,30.25) .. controls (75.53,45.75) and (135.53,45.75) .. (145.2,30.25) ;
%Shape: Circle [id:dp23215106227550286] 
\draw  [fill={rgb, 255:red, 0; green, 0; blue, 0 }  ,fill opacity=1 ] (362.54,200.67) .. controls (362.54,201.77) and (361.64,202.67) .. (360.54,202.67) .. controls (359.43,202.67) and (358.54,201.77) .. (358.54,200.66) .. controls (358.54,199.56) and (359.44,198.67) .. (360.54,198.67) .. controls (361.65,198.67) and (362.54,199.57) .. (362.54,200.67) -- cycle ;
%Shape: Circle [id:dp33190005575149584] 
\draw  [fill={rgb, 255:red, 0; green, 0; blue, 0 }  ,fill opacity=1 ] (362.33,161) .. controls (362.32,162.11) and (361.43,163) .. (360.32,163) .. controls (359.22,163) and (358.32,162.1) .. (358.33,161) .. controls (358.33,159.89) and (359.22,159) .. (360.33,159) .. controls (361.43,159) and (362.33,159.9) .. (362.33,161) -- cycle ;
%Straight Lines [id:da5672558379137075] 
\draw    (410.26,210.74) -- (360.21,240.67) ;
%Straight Lines [id:da6976529915587796] 
\draw    (310.26,210.6) -- (360.21,240.67) ;
%Straight Lines [id:da3606669306654191] 
\draw    (360.54,200.67) -- (410.26,210.74) ;
%Straight Lines [id:da4634674305133495] 
\draw    (360.54,200.67) -- (310.26,210.6) ;
%Shape: Circle [id:dp9186463066672925] 
\draw  [fill={rgb, 255:red, 0; green, 0; blue, 0 }  ,fill opacity=1 ] (312.26,210.6) .. controls (312.25,211.7) and (311.36,212.6) .. (310.25,212.6) .. controls (309.15,212.6) and (308.25,211.7) .. (308.26,210.59) .. controls (308.26,209.49) and (309.15,208.6) .. (310.26,208.6) .. controls (311.36,208.6) and (312.26,209.5) .. (312.26,210.6) -- cycle ;
%Shape: Circle [id:dp8341691829372796] 
\draw  [fill={rgb, 255:red, 0; green, 0; blue, 0 }  ,fill opacity=1 ] (412.26,210.74) .. controls (412.25,211.84) and (411.36,212.74) .. (410.25,212.74) .. controls (409.15,212.73) and (408.25,211.84) .. (408.26,210.73) .. controls (408.26,209.63) and (409.15,208.73) .. (410.26,208.74) .. controls (411.36,208.74) and (412.26,209.63) .. (412.26,210.74) -- cycle ;
%Shape: Circle [id:dp9166159425061825] 
\draw  [fill={rgb, 255:red, 0; green, 0; blue, 0 }  ,fill opacity=1 ] (362.21,240.67) .. controls (362.21,241.77) and (361.32,242.67) .. (360.21,242.67) .. controls (359.11,242.67) and (358.21,241.77) .. (358.21,240.66) .. controls (358.22,239.56) and (359.11,238.67) .. (360.22,238.67) .. controls (361.32,238.67) and (362.22,239.56) .. (362.21,240.67) -- cycle ;
%Curve Lines [id:da43436912451855536] 
\draw    (360.54,200.67) .. controls (350.95,190.4) and (350.71,170.9) .. (360.33,161) ;
%Curve Lines [id:da7092951759725261] 
\draw    (410.26,210.74) .. controls (414.91,220.91) and (370.2,250.18) .. (360.21,240.67) ;
%Curve Lines [id:da8758673940869512] 
\draw    (310.26,210.6) .. controls (304.91,220.76) and (349.87,250.49) .. (360.21,240.67) ;
%Curve Lines [id:da9791551451294654] 
\draw    (360.54,200.67) .. controls (370.45,190.68) and (370.21,170.93) .. (360.33,161) ;
%Straight Lines [id:da6410912922943459] 
\draw    (360.33,161) -- (410.26,210.74) ;
%Straight Lines [id:da9358599177370918] 
\draw    (360.33,161) -- (310.26,210.6) ;

% Text Node
\draw (440,33.5) node [anchor=north] [inner sep=0.75pt]  [font=\footnotesize]  {$1$};
% Text Node
\draw (403.5,57.9) node [anchor=north] [inner sep=0.75pt]  [font=\footnotesize]  {$2$};
% Text Node
\draw (412,106.6) node [anchor=south west] [inner sep=0.75pt]  [font=\footnotesize]  {$3$};
% Text Node
\draw (468,106.6) node [anchor=south east] [inner sep=0.75pt]  [font=\footnotesize]  {$4$};
% Text Node
\draw (481,57.9) node [anchor=north] [inner sep=0.75pt]  [font=\footnotesize]  {$5$};
% Text Node
\draw (270,32.5) node [anchor=north] [inner sep=0.75pt]  [font=\footnotesize]  {$1$};
% Text Node
\draw (231,61.9) node [anchor=north] [inner sep=0.75pt]  [font=\footnotesize]  {$2$};
% Text Node
\draw (237,110) node [anchor=east] [inner sep=0.75pt]  [font=\footnotesize]  {$3$};
% Text Node
\draw (303.77,110) node [anchor=west] [inner sep=0.75pt]  [font=\footnotesize]  {$4$};
% Text Node
\draw (311,61.4) node [anchor=north] [inner sep=0.75pt]  [font=\footnotesize]  {$5$};
% Text Node
\draw (146,160.4) node [anchor=east] [inner sep=0.75pt]  [font=\footnotesize]  {$1$};
% Text Node
\draw (146,240.4) node [anchor=east] [inner sep=0.75pt]  [font=\footnotesize]  {$2$};
% Text Node
\draw (233.6,240.4) node [anchor=west] [inner sep=0.75pt]  [font=\footnotesize]  {$3$};
% Text Node
\draw (233.6,160.4) node [anchor=west] [inner sep=0.75pt]  [font=\footnotesize]  {$4$};
% Text Node
\draw (190,203) node [anchor=north] [inner sep=0.75pt]  [font=\footnotesize]  {$5$};
% Text Node
\draw (65.2,33.65) node [anchor=north east] [inner sep=0.75pt]  [font=\footnotesize]  {$1$};
% Text Node
\draw (65.2,106.85) node [anchor=south east] [inner sep=0.75pt]  [font=\footnotesize]  {$2$};
% Text Node
\draw (147.2,106.85) node [anchor=south west] [inner sep=0.75pt]  [font=\footnotesize]  {$3$};
% Text Node
\draw (145.2,33.65) node [anchor=north west][inner sep=0.75pt]  [font=\footnotesize]  {$4$};
% Text Node
\draw (112.98,69.16) node [anchor=west] [inner sep=0.75pt]  [font=\footnotesize]  {$5$};
% Text Node
\draw (360,157) node [anchor=south] [inner sep=0.75pt]  [font=\footnotesize]  {$1$};
% Text Node
\draw (310.26,207.2) node [anchor=south east] [inner sep=0.75pt]  [font=\footnotesize]  {$2$};
% Text Node
\draw (360,237) node [anchor=south] [inner sep=0.75pt]  [font=\footnotesize]  {$3$};
% Text Node
\draw (410.26,207.33) node [anchor=south west] [inner sep=0.75pt]  [font=\footnotesize]  {$4$};
% Text Node
\draw (360,204) node [anchor=north] [inner sep=0.75pt]  [font=\footnotesize]  {$5$};
% Text Node
\draw (95.71,125.14) node [anchor=north west][inner sep=0.75pt]   [align=left] {5A};
% Text Node
\draw (260.57,124.71) node [anchor=north west][inner sep=0.75pt]   [align=left] {5B};
% Text Node
\draw (430.57,124.57) node [anchor=north west][inner sep=0.75pt]   [align=left] {5C};
% Text Node
\draw (180.57,254.86) node [anchor=north west][inner sep=0.75pt]   [align=left] {5D};
% Text Node
\draw (350.29,255.11) node [anchor=north west][inner sep=0.75pt]   [align=left] {5E};

\end{tikzpicture}
    \caption{Graphs representing all connected, loopless five-crossing link diagrams}
    \label{fig:five_crossing_four_regular_graphs}
\end{figure}

Notice that 5A and 5B each contain a tripled edge, meaning each has a component eliminable via an RII-move (see the proof of Lemma \ref{lemma:four_crossing_one_RIII}, above).

A link representing 5C, if alternating, is a non-trivial knot. If adjacent crossings share an overstrand, an RII-move reduces the diagram to one with three crossings, and we apply Lemma \ref{lemma:four_crossing_one_RIII}.

For graph 5D, there are three possible situations for non-alternating diagrams. First, it may be that either the upper or lower pair of crossings shares an under- or overcrossing. In this case, an RII-move will reduce to a three-crossing diagram. Otherwise, an overstrand will be shared between the center crossing and at least one other. These situations are modeled, up to planar isotopy and possibly reflection, in Figure \ref{fig:link_projections_with_5D}. In the first case, the diagram is not an unlink. In the second, a single RIII-move followed by an RI-move reduces to a diagram with four crossings.

\begin{figure}[h!]
    \centering

\tikzset{every picture/.style={line width=0.75pt}} %set default line width to 0.75pt        

\begin{tikzpicture}[x=0.75pt,y=0.75pt,yscale=-1,xscale=1]
%uncomment if require: \path (0,174); %set diagram left start at 0, and has height of 174

%Curve Lines [id:da23793773506284477] 
\draw [line width=1.5]    (102.64,57.45) .. controls (97,62.73) and (65.93,94.2) .. (60,100) .. controls (54.07,105.8) and (40.07,115.5) .. (40,120) .. controls (39.93,124.5) and (59.82,140.07) .. (80,140) .. controls (100.18,139.93) and (120.07,124.5) .. (120,120) .. controls (119.93,115.5) and (113.93,112.07) .. (108.6,108.2) ;
%Curve Lines [id:da3990828799263516] 
\draw [line width=1.5]    (77.55,76.73) .. controls (72.05,70.37) and (40.07,45.21) .. (40,40) .. controls (39.93,34.79) and (59.64,20.07) .. (80,20) .. controls (100.36,19.93) and (119.93,35.36) .. (120,40) .. controls (120.07,44.64) and (112.09,49.27) .. (107.73,53.27) ;
%Curve Lines [id:da8847823244445251] 
\draw [line width=1.5]    (53.1,58.2) .. controls (40.9,70) and (40.5,90.4) .. (52.7,102.8) ;
%Curve Lines [id:da6647945293134071] 
\draw [line width=1.5]    (56.73,106.73) .. controls (70.07,120.07) and (92.07,120.07) .. (106.07,105.13) .. controls (120.07,90.2) and (120.16,69.48) .. (105.18,54.91) .. controls (90.2,40.33) and (70.45,40) .. (57.36,52.91) ;
%Curve Lines [id:da7224865318482638] 
\draw [line width=1.5]    (83.27,82.2) .. controls (90,88.2) and (96.6,98.47) .. (102.87,103.53) ;
%Curve Lines [id:da300071551082184] 
\draw [line width=1.5]    (245,55) .. controls (239.93,59.53) and (205.93,94.2) .. (200,100) .. controls (194.07,105.8) and (180.07,115.5) .. (180,120) .. controls (179.93,124.5) and (199.82,140.07) .. (220,140) .. controls (240.18,139.93) and (260.07,124.5) .. (260,120) .. controls (259.93,115.5) and (253.93,112.07) .. (248.6,108.2) ;
%Curve Lines [id:da26633327563022147] 
\draw [line width=1.5]    (192.64,53.57) .. controls (187.14,47.21) and (180.07,45.21) .. (180,40) .. controls (179.93,34.79) and (199.64,20.07) .. (220,20) .. controls (240.36,19.93) and (259.93,35.36) .. (260,40) .. controls (260.07,44.64) and (250.07,50.47) .. (245,55) ;
%Curve Lines [id:da1842220015064412] 
\draw [line width=1.5]    (242.64,53) .. controls (230.21,40.14) and (209.5,40.43) .. (194.79,55) .. controls (180.07,69.57) and (180.09,89.91) .. (192.7,102.8) ;
%Curve Lines [id:da9793880664480952] 
\draw [line width=1.5]    (196.73,106.73) .. controls (210.07,120.07) and (232.07,120.07) .. (246.07,105.13) .. controls (260.07,90.2) and (260.07,69.71) .. (246.11,57.44) ;
%Curve Lines [id:da4844022586796852] 
\draw [line width=1.5]    (222.64,82.57) .. controls (229.38,88.57) and (236.6,98.47) .. (242.87,103.53) ;
%Curve Lines [id:da04275897146889274] 
\draw [line width=1.5]    (196.36,56.86) .. controls (203.09,62.86) and (211.5,71.29) .. (217.5,77) ;

\end{tikzpicture}
    \caption{Two five-crossing link diagrams}
    \label{fig:link_projections_with_5D}
\end{figure}

\begin{figure}[h!]
    \centering

\tikzset{every picture/.style={line width=0.75pt}} %set default line width to 0.75pt        

\begin{tikzpicture}[x=0.75pt,y=0.75pt,yscale=-1,xscale=1]
%uncomment if require: \path (0,205); %set diagram left start at 0, and has height of 205

%Curve Lines [id:da8244454841287251] 
\draw [line width=1.5]    (105,65) .. controls (99.93,69.53) and (65.93,104.2) .. (60,110) .. controls (54.07,115.8) and (40.07,125.5) .. (40,130) .. controls (39.93,134.5) and (59.82,150.07) .. (80,150) .. controls (100.18,149.93) and (120.07,134.5) .. (120,130) .. controls (119.93,125.5) and (113.93,122.07) .. (108.6,118.2) ;
%Curve Lines [id:da6139520510974754] 
\draw [line width=1.5]    (52.64,63.57) .. controls (47.14,57.21) and (40.07,55.21) .. (40,50) .. controls (39.93,44.79) and (59.64,30.07) .. (80,30) .. controls (100.36,29.93) and (119.93,45.36) .. (120,50) .. controls (120.07,54.64) and (110.07,60.47) .. (105,65) ;
%Curve Lines [id:da3020473734794742] 
\draw [line width=1.5]    (102.64,63) .. controls (90.22,50.14) and (69.5,50.43) .. (54.79,65) .. controls (40.07,79.57) and (40.09,99.91) .. (52.7,112.8) ;
%Curve Lines [id:da8987277999592778] 
\draw [line width=1.5]    (56.73,116.73) .. controls (70.07,130.07) and (92.07,130.07) .. (106.07,115.13) .. controls (120.07,100.2) and (120.07,79.71) .. (106.11,67.44) ;
%Curve Lines [id:da10133978104714869] 
\draw [line width=1.5]    (82.64,92.57) .. controls (89.38,98.57) and (96.6,108.47) .. (102.87,113.53) ;
%Curve Lines [id:da3186055042065137] 
\draw [line width=1.5]    (56.36,66.86) .. controls (63.09,72.86) and (71.5,81.29) .. (77.5,87) ;
%Curve Lines [id:da46081470067549357] 
\draw [line width=1.5]    (265,66.67) .. controls (257.9,73.1) and (254,80.9) .. (259.9,88.5) .. controls (265.8,96.1) and (285.67,101.95) .. (285.48,109.19) .. controls (285.29,116.43) and (270.6,130.8) .. (271.1,135.8) .. controls (271.6,140.8) and (285,140.5) .. (285.3,136.3) .. controls (285.6,132.1) and (284.13,131.97) .. (278.8,128.1) ;
%Curve Lines [id:da9062157883589919] 
\draw [line width=1.5]    (212.64,65.24) .. controls (207.14,58.88) and (200.07,56.88) .. (200,51.67) .. controls (199.93,46.45) and (219.64,31.74) .. (240,31.67) .. controls (260.36,31.6) and (279.93,47.02) .. (280,51.67) .. controls (280.07,56.31) and (270.07,62.13) .. (265,66.67) ;
%Curve Lines [id:da5814801486296441] 
\draw [line width=1.5]    (262.64,64.67) .. controls (250.21,51.81) and (229.5,52.1) .. (214.79,66.67) .. controls (200.07,81.24) and (200.2,100.4) .. (215,115) ;
%Curve Lines [id:da2703154338914673] 
\draw [line width=1.5]    (215,115) .. controls (230.4,130.1) and (250.2,129.9) .. (265.4,114.7) .. controls (268.9,111.1) and (273.2,106) .. (274.4,102.3) ;
%Curve Lines [id:da8208811300822454] 
\draw [line width=1.5]    (237.5,88.67) .. controls (244.39,95.09) and (258.3,107.26) .. (263.52,112.65) ;
%Curve Lines [id:da08413511636816517] 
\draw [line width=1.5]    (216.36,68.52) .. controls (223.09,74.52) and (231.5,82.95) .. (237.5,88.67) ;
%Curve Lines [id:da9886576411685365] 
\draw [line width=1.5]    (267.12,116.32) .. controls (268.92,118.22) and (273.02,123.32) .. (274.32,124.42) ;
%Curve Lines [id:da21077996322765624] 
\draw [line width=1.5]    (266.8,68.3) .. controls (270.5,71.5) and (280.1,80.4) .. (276.2,96.8) ;
%Curve Lines [id:da4959879573940953] 
\draw [line width=1.5]    (427.07,75) .. controls (419.97,81.43) and (416.07,89.23) .. (421.97,96.83) .. controls (427.87,104.43) and (442.29,108.68) .. (446.31,114.75) .. controls (450.34,120.82) and (452.52,126.64) .. (445.06,132.64) .. controls (437.61,138.64) and (433.56,127.25) .. (431.81,125.75) ;
%Curve Lines [id:da9212147702154535] 
\draw [line width=1.5]    (374.71,73.57) .. controls (369.21,67.21) and (362.14,65.21) .. (362.06,60) .. controls (361.99,54.79) and (381.71,40.07) .. (402.06,40) .. controls (422.42,39.93) and (441.99,55.36) .. (442.06,60) .. controls (442.14,64.64) and (432.13,70.47) .. (427.07,75) ;
%Curve Lines [id:da7896532262881674] 
\draw [line width=1.5]    (424.7,73) .. controls (412.28,60.14) and (391.57,60.43) .. (376.85,75) .. controls (362.14,89.57) and (362.27,108.73) .. (377.07,123.33) ;
%Curve Lines [id:da33544510052462106] 
\draw [line width=1.5]    (377.07,123.33) .. controls (392.47,138.43) and (412.27,138.23) .. (427.47,123.03) .. controls (430.97,119.43) and (435.27,114.33) .. (436.47,110.63) ;
%Curve Lines [id:da8368555650911927] 
\draw [line width=1.5]    (399.57,97) .. controls (406.46,103.42) and (420.37,115.59) .. (425.59,120.99) ;
%Curve Lines [id:da2986138833551033] 
\draw [line width=1.5]    (378.42,76.86) .. controls (385.16,82.86) and (393.57,91.29) .. (399.57,97) ;
%Curve Lines [id:da0015466484043928919] 
\draw [line width=1.5]    (428.87,76.63) .. controls (432.57,79.83) and (442.17,88.73) .. (438.27,105.13) ;
%Straight Lines [id:da6471929393240963] 
\draw    (130,80) -- (187,80) ;
\draw [shift={(190,80)}, rotate = 180] [fill={rgb, 255:red, 0; green, 0; blue, 0 }  ][line width=0.08]  [draw opacity=0] (8.93,-4.29) -- (0,0) -- (8.93,4.29) -- cycle    ;
%Straight Lines [id:da5295311696411817] 
\draw    (290,80) -- (347,80) ;
\draw [shift={(350,80)}, rotate = 180] [fill={rgb, 255:red, 0; green, 0; blue, 0 }  ][line width=0.08]  [draw opacity=0] (8.93,-4.29) -- (0,0) -- (8.93,4.29) -- cycle    ;

% Text Node
\draw (158.5,90.5) node  [font=\footnotesize] [align=left] {\begin{minipage}[lt]{16.08pt}\setlength\topsep{0pt}
RIII
\end{minipage}};
% Text Node
\draw (318.5,90.5) node  [font=\footnotesize] [align=left] {\begin{minipage}[lt]{12.53pt}\setlength\topsep{0pt}
RI
\end{minipage}};

\end{tikzpicture}
    \caption{Two Reidemeister moves eliminating a crossing}
    \label{fig:R3_moves_for_5D}
\end{figure}

If two adjacent crossings in 5E from among crossings 2 through 4 share an overstrand, then an RII-move reduces to a three-crossing diagram. Likewise if crossings 1 and 5 share an overstrand. Suppose then that crossings 1 and 5 do not share an overstrand, likewise for any given pair among crossings 2 through 4, and that crossing 1 shares an overstrand with either crossing 2 or 4. Equivalently, crossing 5 shares an overstrand with the other of 2 or 4. Then, up to planar isotopy and possible reflection, the situation matches that in Figure \ref{fig:five_crossing_figure_eight}, and the diagram represents a figure-eight knot.

\end{proof}

\begin{figure}[h!]
    \centering

\tikzset{every picture/.style={line width=0.75pt}} %set default line width to 0.75pt        

\begin{tikzpicture}[x=0.75pt,y=0.75pt,yscale=-1,xscale=1]
%uncomment if require: \path (0,300); %set diagram left start at 0, and has height of 300

%Curve Lines [id:da04033137965341371] 
\draw [line width=1.5]    (60,100.08) .. controls (54.46,113.77) and (88.46,115.46) .. (93.38,107.92) ;
%Curve Lines [id:da13567418628460282] 
\draw [line width=1.5]    (89.5,73.5) .. controls (99.94,68.94) and (107.92,51.09) .. (97.7,41.2) ;
%Curve Lines [id:da5987881057128168] 
\draw [line width=1.5]    (98.73,71.57) .. controls (104.73,78.97) and (119.47,83.41) .. (124.21,93.21) ;
%Curve Lines [id:da9377246256904288] 
\draw [line width=1.5]    (93.01,66.23) .. controls (89.15,62.43) and (87.27,58.5) .. (87.27,52.68) .. controls (87.27,46.86) and (92.86,41.91) .. (99.61,33.93) .. controls (106.36,25.95) and (125.38,38.71) .. (129.61,43.93) .. controls (133.83,49.15) and (139.81,56.63) .. (139.81,68.63) .. controls (139.81,80.63) and (133.93,89.15) .. (127.07,96.79) .. controls (120.21,104.42) and (101.74,97.12) .. (96.93,102.85) ;
%Curve Lines [id:da690535885053271] 
\draw [line width=1.5]    (59.5,95.07) .. controls (55.28,90.07) and (48.7,78) .. (48.7,68) .. controls (48.7,58) and (54.17,48.5) .. (59.5,43.5) .. controls (64.83,38.5) and (81.7,21.6) .. (91.9,35) ;
%Curve Lines [id:da8569068441107841] 
\draw [line width=1.5]    (60,100.08) .. controls (66.33,84.42) and (79.72,78.06) .. (89.5,73.5) ;
%Curve Lines [id:da8120027992124768] 
\draw [line width=1.5]    (63.93,99.64) .. controls (71.26,108.09) and (89.79,102.64) .. (95.04,105.14) .. controls (100.3,107.64) and (136.5,118.5) .. (127.79,100.79) ;

\end{tikzpicture}
    \caption{A diagram of the figure-eight knot}
    \label{fig:five_crossing_figure_eight}
\end{figure}

We are now ready to prove our main result. We recall that a bridge trisection of $K$ is determined by a triple of unlinks in bridge position \cite{mz}, and thanks to Remark 3.6 of \cite{jmmz}, we know that triple points correspond exactly to RIII-moves in untyings of these unlinks. Leveraging lemmas \ref{lemma:four_crossing_one_RIII} and \ref{lemma:fivecross}, we can bound the number of such moves. 

\begin{thm}\label{thm:five-crossing-ribbon}

Every 2-knot embedded in $S^4$ admitting a bridge trisection with at most five crossings is ribbon.

\end{thm}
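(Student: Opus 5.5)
Let $K$ be a 2-knot with a tri-plane diagram $\mathcal{D}=(T_\a,T_\b,T_\g)$ satisfying $c(\mathcal{D})\le 5$. The plan is to produce an immersion of $K$ into an equatorial $S^3$ with at most one triple point. We then invoke Satoh's results on triple point numbers \cite{S03,S05} to conclude that $K$ is ribbon.

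The first step is to form the three unlink diagrams
\[
T_\a\cup\overline{T_\b},\qquad T_\b\cup\overline{T_\g},\qquad T_\g\cup\overline{T_\a}.
\]
These are the boundaries $\del D_i$, and each inherits a bridge splitting. Each crossing of $\mathcal{D}$ lies in exactly one tangle and so appears in exactly two of these unlink diagrams. The three crossing counts are therefore $c_\a+c_\b$, $c_\b+c_\g$ and $c_\g+c_\a$, and they sum to $2c(\mathcal{D})\le 10$.

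Next we bound the number of RIII-moves needed to untie each diagram. Lemma~\ref{lemma:four_crossing_one_RIII} says that any unlink diagram with at most four crossings needs none. Lemma~\ref{lemma:fivecross} says that one with five crossings needs at most one. An RIII-move requires a diagram with at least three crossings, and Lemma~\ref{lemma:fivecross} only applies to diagrams with exactly five. So if a diagram of five or more crossings uses RIII-moves, the crossing counts force the following. If some pair has $c_i+c_j\ge 5$, then the remaining tangle has $c_k\le 5-(c_i+c_j)\le 0$. Hence that pair has exactly five crossings, and the other two unlinks have $c_i$ and $c_j$ crossings respectively, both at most $5$. Because $c_i+c_j=5$, one of these is at most $2$ and the other at most $5$.

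This is the delicate case, and I expect it to be the main obstacle. Take, say, $c_i=5$, $c_j=0$. Then two unlink diagrams have five crossings each, and the total count of RIII-moves could be two. The lemmas may need to be applied more carefully here, or the untyings chosen compatibly, perhaps by exploiting the fact that the untying of $T_i\cup\overline{T_j}$ with $T_j$ crossingless is essentially a simplification of the single tangle $T_i$. In every other case the total number of RIII-moves over all three untyings is at most one.

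Once the RIII count is settled, Remark~3.6 of \cite{jmmz} converts a choice of untyings into an immersion of $K$ in $S^3$ whose triple points correspond exactly to the RIII-moves. This gives an immersion with at most one triple point; in the bad case we must reduce the count to at most one, or else use a bound like two. Satoh shows \cite{S03,S05} that no 2-knot has triple point number one or two, so the triple point number of $K$ is $0$. By the characterization recalled in Section~2, a triple-point-free immersion (after eliminating branch points via Roseman moves) shows that $K$ is ribbon.

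Using Satoh's result that triple point number is never $1$ or $2$ handles the bad case as well: even if two RIII-moves occur, the resulting immersion has at most two triple points. So $K$ is ribbon in every case.
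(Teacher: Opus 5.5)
Your proposal is correct and follows essentially the same route as the paper. You bound the crossings in the three unlinks $T_j\cup\overline{T_k}$ and use the two lemmas to see that at most two of them can need an RIII-move; both would be five-crossing, which happens only when all crossings sit in one tangle. You then convert the untyings into an immersion with at most two triple points via Remark 3.6 of \cite{jmmz}, and conclude from Satoh's theorem that the triple point number is zero, so the knot is ribbon. Your worry about the ``delicate case'' is unnecessary: the paper, like your final paragraph, simply absorbs the two possible triple points into Satoh's bound, which it quotes as ``at most three triple points implies none.''
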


\begin{proof}

If a bridge-trisected sphere $K \subset S^4$ has a tri-plane diagram $(T_\a, T_\b, T_\g)$ that has four or fewer total crossings, then every unlink diagram arising in a pairwise union of the tangles in the diagram will have at most four crossings, and so it will admit an untying with no RIII-moves. In this case, $K$ is ribbon.

By a result of Satoh \cite{S05}, we know that a 2-knot admitting an immersion with at most three triple points admits an immersion with none. So it will suffice to show that a five-crossing bridge trisection has at most three triple points.

Suppose now that $K$ has five total crossings, and suppose that none of the three tangles contains all five crossings; that is, that at least two tangles have at least one crossing each. Then at most one unlink pair, suppose without loss of generality $T_\a \cup T_\b$, has five crossings. Since all unlink pairs with fewer than five crossings can be untied without RIII-moves, $K$ will have at most one triple point, arising in the disk system capping off the $T_\a \cup T_\b$ unlink.

If instead all crossings are concentrated into a single tangle (say $T_\a$), then both pairs in which that tangle appears will have five crossings, but the third unlink pair will have none. Thus, $K$ will have at most two triple points, corresponding to any RIII-moves used in an untying of either of the five-crossing unlinks.

We now see that for a tri-plane diagram with five or fewer crossings, no more than two RIII-moves are required to untie all three unlink diagrams obtained by pairwise unions of tangles. So a knotted surface admitting such a diagram has at most two triple points, and by Satoh's result \cite{S05}, it can be constructed with none.
\end{proof}

This result has two immediate corollaries, the first restricting the possibilities for low-crossing surface knots to those of ribbon type.

\begin{cor}
    Knotted surfaces admitting tri-plane diagrams with five or fewer total crossings are ribbon.
\end{cor}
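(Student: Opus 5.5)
The plan is to reduce the corollary to Theorem \ref{thm:five-crossing-ribbon} and its proof, separating the orientable and non-orientable cases, since the theorem as stated concerns 2-knots.

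\textbf{Orientable case.} The triple-point bound in the proof of Theorem \ref{thm:five-crossing-ribbon} never used the genus of the surface. Remark 3.6 of \cite{jmmz} identifies triple points with RIII-moves in untyings of the three pairwise-union unlinks, and Lemmas \ref{lemma:four_crossing_one_RIII} and \ref{lemma:fivecross} bound those moves. So any knotted surface with a tri-plane diagram of at most five crossings has an immersion into $S^3$ with at most two triple points.

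For knotted spheres, Satoh's result \cite{S05} removes these triple points, and we are finished. For higher-genus orientable surfaces I would first try to invoke the analogous statements in \cite{S03,S05}. If those are unavailable, I would instead interpret ``ribbon'' for the surface directly: a diagram with no triple points has double-point set consisting of circles and arcs. The branch points can be cancelled by the Roseman moves of type c and d described in the Preliminaries. A triple-point-free, branch-point-free immersion then presents a ribbon surface, by Yajima's characterization \cite{yaj}.

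\textbf{Main obstacle.} I expect the main difficulty to be the step from ``at most two triple points'' to ``no triple points'' outside the sphere case. I would try to prove directly that an immersion with one or two triple points can be simplified by Roseman moves. The key fact is that triple points come in pairs with opposite signs in a suitable count, since the relevant colouring cocycle invariants vanish in low complexity. Two triple points forming such a cancelling pair can then be removed by a Roseman tetrahedral move.

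\textbf{Non-orientable case.} If the statement is meant to include non-orientable surfaces, the same triple-point bound applies. However, the conclusion should probably be read as ribbon in the sense of Yajima, possibly after adding trivial $\mathbb{RP}^2$ summands. I would flag this case and restrict the argument to orientable surfaces if necessary.
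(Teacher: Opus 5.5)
For knotted spheres your argument is the paper's own: the corollary is recorded as an immediate consequence of Theorem~\ref{thm:five-crossing-ribbon}. There, ``knotted surfaces'' means $2$-knots, since the paper restricts to knotted spheres from the introduction on and defines ``ribbon'' only for $2$-knots. So your step ``at most two triple points, then Satoh \cite{S05}'' is the whole proof. You should drop the rest of the proposal rather than offer it as part of the argument, because beyond genus zero it does not work as written.

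You are right that the bound of at most two triple points, coming from the two lemmas and Remark~3.6 of \cite{jmmz}, does not depend on the genus. The difficulty is that everything sphere-specific lies in removing those triple points.
\begin{itemize}
\item \cite{S05} is a theorem about $2$-knots, and \cite{S03} is the computation for the $2$-twist spun trefoil. Neither gives a higher-genus analogue.
\item Your substitute cancellation mechanism fails. The tetrahedral (quadruple-point) Roseman move replaces four triple points by four triple points, so it never lowers their number. The move that does cancel two triple points pushes a sheet back across a double curve. It applies only when the two triple points sit in that specific local configuration, and no sign count supplies that.
\item Quandle cocycle invariants give only lower bounds on triple point number. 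So their vanishing, which you have not established either, cannot be turned into an isotopy that removes triple points.
\item Your fallback through Yajima presupposes a triple-point-free diagram, which you have not obtained. Even then, \cite{yaj} concerns $2$-spheres, where every preimage curve of the double-point set bounds a disc. In positive genus these curves can be essential, e.g.\ the fibres $\{p\}\times S^1$ in the standard diagram of a spun torus. A triple-point-free diagram does not by itself give a ribbon presentation there.
\item The non-orientable paragraph is a guess, not an argument.
\end{itemize}
Extending the corollary beyond $2$-knots would need genuinely new input, both for removing the triple points and for passing from a triple-point-free diagram to a ribbon presentation.
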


Further, this gives us the first known crossing number for a non-trivial knotted surface.

\begin{cor}\label{cor:2-twist}
    The 2-twist spin of the trefoil has crossing number 6.
\end{cor}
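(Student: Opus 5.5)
The corollary needs two inequalities: an upper bound of six and a lower bound of six.

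For the upper bound, I will use the tri-plane diagram of Figure \ref{fig:tri-plane}, supplied by Meier \cite{diagram}. The tangles $T_\a$ and $T_\b$ are crossingless, and $T_\g$ carries six crossings. So it suffices to confirm that this triple really is a tri-plane diagram, meaning each pairwise union $T_\a\cup\overline{T_\b}$, $T_\b\cup\overline{T_\g}$, $T_\g\cup\overline{T_\a}$ is an unlink. I also need to confirm that the resulting surface is the 2-twist spun trefoil. I will cite \cite{diagram} for the identification. If a check is wanted, one can compute the fundamental group from the tri-plane diagram via the Wirtinger-type presentation of \cite{mz}. It should give the 2-twist spun trefoil group $\langle x,y \mid xyx=yxy,\ x^2y=yx^2\rangle$, whose nontrivial structure also distinguishes the surface from the unknot. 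This gives crossing number at most $6$.

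For the lower bound, I will apply Theorem \ref{thm:five-crossing-ribbon}. If the 2-twist spun trefoil had a tri-plane diagram with at most five crossings, it would be ribbon. So it suffices to show that the 2-twist spun trefoil is not ribbon. I plan to invoke known obstructions, and the cleanest is via triple point number. Satoh and Shima computed that twist spins of the trefoil have nonzero triple point number, via quandle cocycle invariants; the 2-twist spun trefoil has triple point number $4$ \cite{S03}. Combined with the characterization recalled in Section 2, that a 2-knot is ribbon if and only if it admits an immersion without triple points \cite{S05}, this shows it is not ribbon.

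An alternative route is to use the fact that the Alexander module of a ribbon 2-knot satisfies symmetric-type constraints, or that the 2-twist spun trefoil has a nontrivial Fox coloring cocycle invariant. Either would serve if the triple point citation is unavailable. So crossing number at least $6$, and together with the upper bound, equality.

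The main obstacle is the upper bound verification, namely checking that Figure \ref{fig:tri-plane} indeed represents the 2-twist spun trefoil. I intend to rely on \cite{diagram} rather than recompute it, noting at most the fundamental group check above. The lower bound reduces immediately to the theorem plus the cited non-ribbon fact.
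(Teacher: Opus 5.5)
Your argument is correct and is the paper's proof: the upper bound is the six-crossing diagram of Figure~\ref{fig:tri-plane} from \cite{diagram}, and the lower bound is Theorem~\ref{thm:five-crossing-ribbon} together with the Satoh--Shima result \cite{S03} that the triple point number is four, which shows the knot is not ribbon.

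Your side remarks are not needed and are inaccurate as stated. Not every twist spin of the trefoil has nonzero triple point number: the $0$-twist spin is ribbon and the $1$-twist spin is unknotted. Symmetry of the Alexander module is also no ribbon obstruction here; the genuine module-theoretic obstruction is that a ribbon 2-knot's Alexander module has a square presentation, and the 2-twist spun trefoil's module is finite (cyclic of order three), so it cannot have one.
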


\begin{proof}
    Since we know that the 2-twist spun trefoil has triple-point number four \cite{S03}, all that remains is to exhibit a six-crossing diagram of the knot. Figure \ref{fig:tri-plane} is just such a diagram, due to Maxon, Meier, and Rollins \cite{diagram}. For a detailed discussion of this diagram, see Section 7.1 of \cite{rainbow}.
\end{proof}

This is the first known crossing number for a non-trivial knotted surface, and this example illustrates the difficulty in computing them. Prior to the discovery of this diagram, the lowest-crossing known diagram of the knot had fourteen crossings, appearing in \cite{wolf}. The only other 2-knot with a known six-crossing diagram is the spun trefoil (Figure \ref{fig:spuntref}). We believe this to be minimal.

\begin{figure}[h!]
    \centering

\tikzset{every picture/.style={line width=0.75pt}} %set default line width to 0.75pt        

\begin{tikzpicture}[x=0.75pt,y=0.75pt,yscale=-1,xscale=1]
%uncomment if require: \path (0,115); %set diagram left start at 0, and has height of 115

%Straight Lines [id:da024763371435511883] 
\draw    (20,80) -- (110,80) ;
%Curve Lines [id:da9621657479798924] 
\draw [color={rgb, 255:red, 255; green, 0; blue, 0 }  ,draw opacity=1 ]   (30,80) .. controls (30.13,70.25) and (40.13,70.25) .. (40,80) ;
%Curve Lines [id:da17161918019161615] 
\draw [color={rgb, 255:red, 255; green, 0; blue, 0 }  ,draw opacity=1 ]   (70,80) .. controls (70.13,70.25) and (80.13,70.25) .. (80,80) ;
%Curve Lines [id:da08504069022064498] 
\draw [color={rgb, 255:red, 255; green, 0; blue, 0 }  ,draw opacity=1 ]   (50,80) .. controls (50.13,40) and (100.13,40) .. (100,80) ;
%Curve Lines [id:da7974063803804426] 
\draw [color={rgb, 255:red, 255; green, 0; blue, 0 }  ,draw opacity=1 ]   (60,80) .. controls (60,56) and (90.5,55.5) .. (90,80) ;
%Straight Lines [id:da234691155502616] 
\draw    (130,80) -- (220,80) ;
%Curve Lines [id:da3099835388393972] 
\draw [color={rgb, 255:red, 0; green, 0; blue, 255 }  ,draw opacity=1 ]   (140,80) .. controls (140.13,70.25) and (150.13,70.25) .. (150,80) ;
%Curve Lines [id:da9257399587420313] 
\draw [color={rgb, 255:red, 0; green, 0; blue, 255 }  ,draw opacity=1 ]   (170,80) .. controls (170.13,70.25) and (180.13,70.25) .. (180,80) ;
%Curve Lines [id:da5926071423876684] 
\draw [color={rgb, 255:red, 0; green, 0; blue, 255 }  ,draw opacity=1 ]   (160,80) .. controls (160,56) and (190.5,55.5) .. (190,80) ;
%Curve Lines [id:da8103271750266918] 
\draw [color={rgb, 255:red, 0; green, 0; blue, 255 }  ,draw opacity=1 ]   (200,80) .. controls (200.13,70.25) and (210.13,70.25) .. (210,80) ;
%Straight Lines [id:da21249860218811645] 
\draw    (240,80) -- (330,80) ;
%Curve Lines [id:da34405677755181463] 
\draw [color={rgb, 255:red, 126; green, 211; blue, 33 }  ,draw opacity=1 ]   (300,80.02) .. controls (300.13,70.27) and (310.13,70.27) .. (310,80.02) ;
%Curve Lines [id:da9964056552023475] 
\draw [color={rgb, 255:red, 126; green, 211; blue, 33 }  ,draw opacity=1 ]   (250,80.02) .. controls (249.83,47.35) and (249.2,13.02) .. (260.75,29.64) ;
%Curve Lines [id:da634264023521474] 
\draw [color={rgb, 255:red, 126; green, 211; blue, 33 }  ,draw opacity=1 ]   (320,80.02) .. controls (319.89,59.39) and (311.76,41.69) .. (298.8,30.09) .. controls (285.84,18.48) and (256,27.39) .. (261.75,34.89) ;
%Curve Lines [id:da28987503464837516] 
\draw [color={rgb, 255:red, 126; green, 211; blue, 33 }  ,draw opacity=1 ]   (261.93,41.88) .. controls (257.75,36.43) and (266.58,35.64) .. (262.8,31.55) ;
%Curve Lines [id:da044799350585625786] 
\draw [color={rgb, 255:red, 126; green, 211; blue, 33 }  ,draw opacity=1 ]   (263,62.52) .. controls (265.4,57.72) and (260.88,59.67) .. (259.5,54.77) .. controls (258.12,49.86) and (259.28,45.81) .. (260.36,44.84) .. controls (261.45,43.86) and (265.64,39.53) .. (263.2,36.89) ;
%Curve Lines [id:da9409280138242854] 
\draw [color={rgb, 255:red, 126; green, 211; blue, 33 }  ,draw opacity=1 ]   (290.67,80.02) .. controls (290.13,38.89) and (268.93,50.09) .. (263.26,43.14) ;
%Curve Lines [id:da05923200131144235] 
\draw [color={rgb, 255:red, 126; green, 211; blue, 33 }  ,draw opacity=1 ]   (280,80.02) .. controls (280.33,40.42) and (264.2,53.22) .. (262.6,57.09) ;
%Curve Lines [id:da43351332866934034] 
\draw [color={rgb, 255:red, 126; green, 211; blue, 33 }  ,draw opacity=1 ]   (263.73,67.82) .. controls (267.02,64.06) and (258.29,63.75) .. (261.63,59.35) ;
%Curve Lines [id:da9149999260885407] 
\draw [color={rgb, 255:red, 126; green, 211; blue, 33 }  ,draw opacity=1 ]   (270,80.02) .. controls (270.13,69.55) and (258.92,69.55) .. (262.25,65.15) ;
%Curve Lines [id:da6985985652416105] 
\draw [color={rgb, 255:red, 126; green, 211; blue, 33 }  ,draw opacity=1 ]   (260,80.27) .. controls (260.4,76.22) and (259.6,72.35) .. (262.27,69.8) ;

% Text Node
\draw (65,90) node  [font=\footnotesize]  {$T_{\alpha }$};
% Text Node
\draw (175,90) node  [font=\footnotesize]  {$T_{\beta }$};
% Text Node
\draw (285,90) node  [font=\footnotesize]  {$T_{\gamma }$};

\end{tikzpicture}
    \caption{A bridge trisection of the spun trefoil}
    \label{fig:spuntref}
\end{figure}

\begin{conj}
The tri-plane crossing number of the spun trefoil is six.
\end{conj}

Finally, we conjecture that the spun trefoil and 2-twist spun trefoil are, in the sense of tri-plane crossing numbers, the simplest 2-knots.

\begin{conj}
If a tri-plane diagram for an embedded $S^2$ in $S^4$ has six or fewer total crossings, then it represents a bridge trisection of an unknotted sphere, the spun trefoil, or the 2-twist spun trefoil.
\end{conj}

\newpage

\printbibliography
    
\end{document}